\newtheorem{theorem}{Theorem}[section]
\newtheorem{question}[theorem]{Question}
\newtheorem{lemma}[theorem]{Lemma}
\newtheorem{proposition}[theorem]{Proposition}
\newtheorem{corollary}[theorem]{Corollary}
\theoremstyle{definition}
\newtheorem{definition}[theorem]{Definition}
\newtheorem{remark}[theorem]{Remark}
\begin{document}
\author[M. Nasernejad, A. A. Qureshi, S. Bandari, and A. 
 Musapa\c{s}ao\u{g}lu]{Mehrdad ~ Nasernejad$^{1}$,  Ayesha Asloob Qureshi$^{2,*}$, Somayeh Bandari$^{3}$, 
Asli Musapa\c{s}ao\u{g}lu$^{2}$ }
\title[Dominating ideals and closed neighborhood ideals ]{Dominating ideals and closed neighborhood ideals of graphs}
\subjclass[2010]{13B25, 13F20, 05C25, 05E40.} 
\keywords {Closed neighborhood ideals, Dominating ideals, Trees, Cycles, normality, Normally torsion-freeness.}
\thanks{$^*$Corresponding author}

\thanks{Mehrdad Nasernejad  was in part supported by a grant from IPM (No. 14001300118). Ayesha Asloob Qureshi  and Asli Musapa\c{s}ao\u{g}lu were supported by The Scientific and Technological Research Council of Turkey - T\"UBITAK (Grant No: 118F169)}

\thanks{E-mail addresses:  m\_{nasernejad@yahoo.com},  aqureshi@sabanciuniv.edu, s.bandari@bzte.ac.ir, atmusapasaoglu@sabanciuniv.edu}  
\maketitle
 
\begin{center}
{\it
$^1$School of Mathematics, Institute for Research  in  Fundamental \\
Sciences (IPM),  P.O. Box 19395-5746, Tehran, Iran\\
 $^2$Sabanci University, Faculty of Engineering and Natural Sciences, \\
Orta Mahalle, Tuzla 34956, Istanbul, Turkey\\
$^3$ Department of Mathematics, Buein Zahra Technical University,\\ Buein Zahra, Qazvin, Iran
}
\end{center}

\vspace{0.4cm}

\begin{abstract}
We study the closed neighborhood ideals and the dominating ideals of graphs, in particular, some classes of trees and cycles. We prove that the closed neighborhood ideals and the dominating ideals of some classes of  trees are normally torsion-free. The closed neighborhood ideals and the dominating ideals of cycles fail to be normally torsion-free. However, we prove that the closed neighborhood ideals of cycles admit the (strong)  persistence property and the dominating ideals of cycles are nearly normally torsion-free.
\end{abstract}
\vspace{0.4cm}

\section*{Introduction}

Square-free monomial ideals have been connected to several combinatorial structures to facilitate the study of their algebraic and homological properties. Common examples of these combinatorial structures include  graphs, hypergraphs, simplicial complexes and matroids. In particular, every square-free monomial ideal generated in degree two can be viewed as an edge ideal of a simple graph. Edge ideals were introduced by Villarreal in \cite{V2} and since their first appearance, they have been a central topic of many articles. One of the interesting properties  of the edge ideals is that their minimal primes correspond to the minimal vertex covers of their underlying graphs. In other words, the Alexander dual of the edge ideal of a graph $G$ is the cover ideal of $G$, that is, a square-free monomial ideal whose minimal generators correspond  to the minimal vertex covers of the underlying graph. Inspired by this relation, the closed neighborhood ideals and the dominating ideals of graphs were recently introduced in \cite{SM}. Let $G$ be a simple graph. The closed neighborhood ideal $NI(G)$ of $G$  is generated by square-free monomials that correspond to the closed neighborhoods of the vertices of $G$, whereas, the dominating ideal $DI(G)$ of $G$ is generated by the monomials  that correspond to the dominating sets of $G$  (see Section~\ref{prem} for the formal definitions). As shown in \cite{SM}, $NI(G)$ and $DI(G)$ are the Alexander dual of each other, a similar relation that exists between edge ideals and   cover ideals of $G$. 

Domination in graphs was mathematically formulated by Berge and Ore in 1960's and  has been widely studied by many researchers due to its enormous and growing applications in various fields including computer sciences, operations research, linear algebra and optimization. Let $G$ be a simple graph with vertex set $V(G)$. A set $S\subseteq V(G)$ is known as dominating set of $G$ if every vertex in $V(G)\setminus S$ is adjacent to at least one vertex in $S$.  We refer to \cite{HHS} for further concepts related to the domination in graphs. In this paper, our main goal is to further extend the study on closed neighborhood ideals and dominating ideals. In particular, we focus on normally torsion-freeness and certain stability property of these ideals. 

A breakdown of the contents of this paper is as follows: in Section~\ref{prem}, we provide all the needed definitions and notions. In Section~\ref{trees}, we focus on the closed neighborhood ideals and the dominating ideals of star graphs. Any square-free monomial ideal can be visualized as an edge ideal of a hypergraph. A hypergraph $\mathcal{H}$ is called {\em Mengerian} if it satisfies a certain  min-max equation, which is known as the Mengerian property in hypergraph theory or as the max-flow min-cut property in integer programming. Algebraically, it is equivalent to $I(\mathcal{H})$ being normally torsion-free, see \cite[Corollary 10.3.15]{HH1}, \cite[Theorem 14.3.6]{V1}. This fact enhances the importance of normally torsion-free ideals. In Section~\ref{trees}, our main goal is to establish the normally torsion-freeness of the closed neighborhood ideals and the dominating ideals of star graphs, which is achieved in Corollaries ~\ref{star graph1} and \ref{star graph2}. To do this, we first prove some results of general nature, which provide certain inductive and recursive techniques to create new normally torsion-free ideals based on the existing ones, see Theorem~\ref{NTF2} and Lemma~\ref{Whisker1}. We apply these techniques to study the normally torsion-freeness of the closed neighborhood ideals and the dominating ideals of cone graph and whisker graph of a given graph, see Corollary~\ref{suspension} and Lemma~\ref{cone-NI}.  In addition, in  Corollary~\ref{Path.NTF}, we prove that 3-path ideals of path graphs are normally torsion-free. 

In Section~\ref{sec-cycles}, we turn our attention to the closed neighborhood ideals and the dominating ideals of cycles. The edge ideals and the cover ideals of cycles are well-studied in the context of normally torsion-freeness. It is a well-known fact that the edge ideals and cover ideals of even cycles are normally torsion-free, and odd cycles fails to have this property in general. Given a cycle $C_n$ of length $n$, it is natural to expect somewhat similar behaviour for $NI(C_n)$ and $DI(C_n)$, but we observe in Section~\ref{sec-cycles} that this is not the case. Normally torsion-freeness is not maintained by $NI(C_n)$, but we prove in Theorem~\ref{cycle1} that they admit strong persistence property, and therefore, the persistence property. This facilitates to study the behaviour of depth of powers of $NI(C_n)$ in Corollary~\ref{depth}. As a final result, we prove in Theorem~\ref{cycle2} that $DI(C_n)$ are nearly normally torsion-free.

\section{Preliminaries}\label{prem}

Let $G$ be a finite simple undirected graph with vertex set $V(G)$ and edge set $E(G)$. The degree of a vertex $v \in V(G)$ is denoted by $\deg(v)$ and it represents the number of vertices adjacent to $v$. We briefly recall some well-known notions from graph theory. A set $T\subseteq V(G)$ is called a {\em vertex cover} of $G$ if it intersects every edge of $G$ non-trivially. A vertex cover is called minimal if it does not properly contain any other vertex cover of $G$. For each vertex $v \in V(G)$, the {\em closed neighborhood} of $v$ in $G$ is defined as follows: 

\[
N_G[v]=\{u \in V(G) : \{u,v\} \in E(G)\} \cup \{v\}.
\]
When there is no confusion about the underlying graph, we will denote $N_G[v]$ simply by $N[v]$. A subset $S\subseteq V(G)$ is called {\em dominating set} of $G$ if $S \cap N[v] \neq \emptyset$, for all $v\in V(G)$. A dominating set is called minimal if it does not properly contain any other dominating set of $G$. A minimum dominating set of $G$ is a minimal dominating set with the smallest size. The {\em dominating number} of $G$, denoted by $\gamma(G)$ is the size of its minimum dominating set, that is, 
\[
\gamma(G)=\mathrm{min} \{|S|: S \text{ is a minimal dominating set of } G\}
\]

The dominating sets and domination numbers of graphs are
 well-studied topics in graph theory. We refer to \cite{HHS} for further information. 

Let $R=K[x_1, \ldots, x_n]$ be a polynomial ring over a field $K$. An ideal $I\subset R$ is called a monomial ideal if it admits a generating set consisting of monomials. Furthermore, $I$ is said to be square-free monomial ideal if it is generated by square-free monomials. Throughout the following text, the unique minimal generating set of a monomial ideal $I$ will be denoted by $\mathcal{G}(I)$. The {\em support} of a monomial $u$, denoted by $\mathrm{supp}(u)$, is the set of variables that divide $u$. Moreover, for a monomial ideal $I$, we set $\mathrm{supp}(I)=\bigcup_{u \in \mathcal{G}(I)}\mathrm{supp}(u)$.

Let $G$ be a simple graph with  $V(G)=\{1,2,\ldots, n\}$. Square-free monomial ideals of $R$ can be associated with graphs in many different ways. We recall some commonly known definitions in this context. The {\em edge ideal} of $G$, denoted by $I(G)$, is 
\[
I(G)=(x_ix_j : \{i,j\} \in E(G)).
\]
The {\em cover ideal} of $G$, denoted by $J(G)$, is 
\[
J(G)=(\prod_{i \in T}x_i: T \text{ is a minimal vertex cover of } G).
\]

Let $t$ be a fixed positive integer.  The $t$-path ideal of $G$, denoted by $I_t(G)$, is defined as  
\[
I_t(G)=(x_{i_1}x_{i_2}\cdots x_{i_t}: \{i_1, \ldots, i_t\} \text{ is a path of length $t-1$ in } G).
\]

The notion of path ideals is a generalization of edge ideals. Indeed, we have $I(G) =I_2(G)$.

In \cite{SM}, the {\em closed neighborhood ideal} of $G$ has been introduced as 
 \[
NI(G)= (\prod_{j \in N[i]} x_j:  i \in V(G)).
\]

Moreover, in \cite{SM}, the dominating ideal of $G$ is defined as
\[
DI(G)=(\prod_{i \in S}x_i: S \text{ is a minimal dominating set of } G).
\]

 Given a square-free monomial ideal $I\subset R$,  the Alexander dual of  $I$, denoted by $I^\vee$,  is given by 
 $$I^\vee= \bigcap_{u\in \mathcal{G}(I)} (x_i~:~ x_i \in \mathrm{supp}(u)).$$

It is a well-recognized fact that $J(G)$ is the Alexander dual of $I(G)$, for example, see \cite[Lemma 9.1.4]{HH1}. It is shown in \cite[Lemma 2.2]{SM} that $DI(G)$ is the Alexander dual of $NI(G)$. 

Next, we recall some notions related to hypergraphs. A finite {\it hypergraph} $\mathcal{H}$ on a vertex set $[n]=\{1,2,\ldots,n\}$ is a collection of edges  $\{ E_1, \ldots, E_m\}$ with $E_i \subseteq [n]$, for all $i=1, \ldots,m$. The vertex set $[n]$ of $\mathcal{H}$ is denoted by $V({\mathcal{H}})$, and the edge set of $\mathcal{H}$ is denoted by $E({\mathcal{H}})$. The {\it edge ideal} of $\mathcal{H}$ is given by
$$I(\mathcal{H}) = (\prod_{j\in E_i} x_j : E_i\in  E({\mathcal{H}})).$$

A subset  $W \subseteq V_{\mathcal{H}}$  is a {\it vertex cover} of $\mathcal{H}$ if $W \cap E_i\neq \emptyset$  for all $i=1, \ldots, m$. A vertex cover $W$ is {\it minimal} if no proper subset of $W$ is a vertex cover of  $\mathcal{H}$. The cover ideal of the hypergraph  $\mathcal{H}$, 
denoted by $J(\mathcal{H})$, is given by 
\[
J(\mathcal{H})=(\prod_{i \in W}x_i: W \text{ is a minimal vertex cover of } \mathcal{H}).
\]

Similar to the case of edge ideal of graphs, the cover ideal $J(\mathcal{H})$ is  the Alexander dual of $I(\mathcal{H})$, that is,  $J(\mathcal{H})=I(\mathcal{H})^{\vee}$, for example, see \cite[Theorem 6.3.39]{V1}.

Next, we recall some definitions and notions from commutative algebra. Let  $R$ be  a commutative Noetherian ring and $I$ be   an ideal of $R$. A prime ideal $\mathfrak{p}\subset  R$ is an {\it associated prime} of $I$ if there exists an element $v$ in $R$ such that $\mathfrak{p}=(I:_R v)$, where $(I:_R v)=\{r\in R |~  rv\in I\}$. The  {\it set of associated primes} of $I$, denoted by  $\mathrm{Ass}_R(R/I)$, is the set of all prime ideals associated to  $I$. 
The minimal members of $\mathrm{Ass}_R(R/I)$ are called the {\it minimal} primes of $I$, and $\mathrm{Min}(I)$ denotes the set of minimal prime ideals of $I$. Moreover, the associated primes of $I$ which are not minimal are called the {\it embedded} primes of $I$. If $I$ is a square-free monomial ideal, then $\mathrm{Ass}_R(R/I)=\mathrm{Min}(I)$, for example see \cite[Corollary 1.3.6]{HH1}.   Let $I$  be an ideal  of $R$  and  $\mathfrak{p}_1, \ldots, \mathfrak{p}_r$  be   the minimal primes of $I$. When there is no confusion about the underlying ring, we will denote the set of associated primes of $I$ simply by $\mathrm{Ass}(R/I)$. Given an integer $n \geq 1$, the {\it $n$-th symbolic
power} of $I$  is defined to be the ideal 
$$I^{(n)} = \mathfrak{q}_1  \cap \cdots \cap  \mathfrak{q}_r,$$
where $\mathfrak{q}_i$  is the primary component of $I^n$  corresponding to $\mathfrak{p}_i$. 

\begin{definition}
An ideal $I$ is called {\it normally torsion-free} if  $\mathrm{Ass}(R/I^k)\subseteq \mathrm{Ass}(R/I)$, for all $k\geq 1$. If $I$ is a square-free monomial ideal, then $I$ is normally torsion-free if and only if $I^k=I^{(k)}$, for all $k \geq 1$, see \cite[Theorem 1.4.6]{HH1}. The concept of normally torsion-free ideals is generalized in \cite{C} as follows: a monomial ideal $I$ in a polynomial  ring $R=K[x_1, \ldots, x_n]$ over a field $K$ is called {\it nearly normally torsion-free}  if there exist a positive integer $k$ and a monomial prime ideal
 $\mathfrak{p}$ such that $\mathrm{Ass}(R/I^m)=\mathrm{Min}(I)$ for all $1\leq m\leq k$, and 
 $\mathrm{Ass}(R/I^m) \subseteq \mathrm{Min}(I) \cup \{\mathfrak{p}\}$ for all $m \geq k+1$. In \cite{NQKR}, several classes of nearly normally torsion-free ideals arising from graphs and hypergraphs are discussed. By Gitler et al. \cite{GRV}, it is well-known that the cover ideals of bipartite
graphs are normally torsion-free. Furthermore, normally torsion-free square-free
monomial ideals have been studied in Ha and Morey \cite{HM},  and Sullivant \cite{S}. On the other hand, there is little known for the normally torsion-free monomial ideals which are not square-free.

 \end{definition}
 
\begin{definition}\label{spersistence}
The ideal $I \subset R$ is said to have the {\it persistence property} if $\mathrm{Ass}(R/I^k)\subseteq \mathrm{Ass}(R/I^{k+1})$ 
for all positive integers $k$. Moreover, an ideal $I$ satisfies the {\it strong persistence property} if $(I^{k+1}: I)=I^k$ for all positive integers $k$,  for more details refer to \cite{ HQ, N2}. The strong persistence property implies the persistence property, however the converse is not true, as noted in \cite{HQ}. Furthermore,  we say that $I$  has the {\it symbolic strong persistence property}  if $(I^{(k+1)}: I^{(1)})=I^{(k)}$ for all $k$, where $I^{(k)}$ denotes the  $k$-th symbolic  power  of $I$. \end{definition}

 Let $R$ be a unitary commutative ring and $I$ an ideal in $R$. An element $f\in R$ is {\it integral} over $I$, if there exists an equation 
 $$f^k+c_1f^{k-1}+\cdots +c_{k-1}f+c_k=0 ~~\mathrm{with} ~~ c_i\in I^i.$$
 The set of elements $\overline{I}$ in $R$ which are integral over $I$ is the 
 {\it integral closure} of $I$. The ideal $I$ is {\it integrally closed}, if $I=\overline{I}$, and $I$ is {\it normal} if all powers of $I$ are integrally closed, refer to \cite{HH1} for more information. The notion of integrality for a monomial ideal $I$ can be described in a simpler way as following: a monomial $u \in  R=K[x_1, \ldots, x_n]$ is integral over $I\subset R$ if and only if there exists an integer $k$ such that  $u^k \in I^k$, see \cite[Theorem 1.4.2]{HH1}.

\section{On the closed neighborhood ideals and dominating ideals of some classes of  trees}\label{trees}
In this section, our main goal is to establish that the closed neighborhood ideals and the dominating ideals of star graphs  are normally torsion-free. To do this, we will first prove several results of general nature. The next proposition is a well-known result,  but we re-prove it by a new proof.


\begin{proposition}\label{SPPtoPP}
Let $I$ be an ideal in a commutative Noetherian ring $R$ such that satisfies the strong persistence property. Then $I$ has the persistence property.
\end{proposition}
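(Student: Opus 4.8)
The plan is to use the definition of associated primes via colon ideals together with the defining identity of the strong persistence property, namely $(I^{k+1}:I)=I^k$ for all $k\geq 1$. Concretely, fix $k\geq 1$ and suppose $\mathfrak{p}\in\mathrm{Ass}(R/I^k)$; I want to show $\mathfrak{p}\in\mathrm{Ass}(R/I^{k+1})$. By definition there is an element $v\in R$ with $\mathfrak{p}=(I^k:_R v)$.

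The key step is to pass from the witness $v$ of $\mathfrak{p}$ for $I^k$ to a witness for $I^{k+1}$. The natural candidate is to multiply $v$ by a suitable element of $I$: since $I\not\subseteq \mathfrak{p}$ is not generally true, one must be slightly careful, but the clean route is to pick any $w \in I \setminus \{0\}$ — or more precisely to argue that $(I^{k+1}:_R vw) = \mathfrak{p}$ for an appropriately chosen $w\in I$. First I would establish the inclusion $\mathfrak{p}\,vw\subseteq \mathfrak{p}\,v\,I\subseteq I^k I = I^{k+1}$, which shows $\mathfrak{p}\subseteq (I^{k+1}:_R vw)$. For the reverse inclusion, take $r\in (I^{k+1}:_R vw)$, so $rvw\in I^{k+1}$. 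Using the strong persistence property $(I^{k+1}:I)=I^k$, from $w\in I$ and $rvw\in I^{k+1}$ one deduces $rv\in (I^{k+1}:w)\subseteq (I^{k+1}:I)=I^k$, hence $r\in (I^k:_R v)=\mathfrak{p}$. This gives $(I^{k+1}:_R vw)\subseteq \mathfrak{p}$, and combined with the previous inclusion yields $(I^{k+1}:_R vw)=\mathfrak{p}$, provided $vw$ is chosen so that this colon ideal is proper, i.e. $vw\notin I^{k+1}$.

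The main obstacle, and the point that needs the most care, is exactly this last properness issue: one must choose $w\in I$ so that $vw\notin I^{k+1}$, equivalently so that $(I^{k+1}:vw)\neq R$. If no such $w$ existed, then $vI\subseteq I^{k+1}$, which would force $v\in (I^{k+1}:I)=I^k$ by strong persistence, contradicting the fact that $\mathfrak{p}=(I^k:v)$ is a proper ideal (which gives $v\notin I^k$). Hence such a $w$ exists, and the argument closes. I would organize the write-up by first recalling $\mathfrak{p}=(I^k:v)$ with $v\notin I^k$, then exhibiting $w\in I$ with $vw\notin I^{k+1}$ via the contradiction just described, then verifying the two inclusions for $(I^{k+1}:vw)$, and concluding $\mathfrak{p}\in\mathrm{Ass}(R/I^{k+1})$, so that $\mathrm{Ass}(R/I^k)\subseteq\mathrm{Ass}(R/I^{k+1})$ for all $k$, i.e. $I$ has the persistence property.
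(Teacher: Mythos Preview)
Your argument for the reverse inclusion is flawed. You write that from $rvw\in I^{k+1}$ and $w\in I$ one gets
\[
rv\in (I^{k+1}:w)\subseteq (I^{k+1}:I)=I^k,
\]
but the displayed inclusion points the wrong way: the colon is order-reversing in its second argument, so $(w)\subseteq I$ yields $(I^{k+1}:I)\subseteq (I^{k+1}:w)$, not the opposite. Knowing only $rvw\in I^{k+1}$ for a \emph{single} $w\in I$ does not force $rvI\subseteq I^{k+1}$, and hence does not give $rv\in I^k$. Consequently, your choice of $w$ guarantees only that $(I^{k+1}:vw)$ is a proper ideal containing $\mathfrak{p}$; it does not show equality with $\mathfrak{p}$.

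The fix is to let the strong persistence identity do the work \emph{before} singling out a multiplier. From $\mathfrak{p}=(I^k:v)$ and $I^k=(I^{k+1}:I)$ one has
\[
\mathfrak{p}=\bigl((I^{k+1}:I):v\bigr)=(I^{k+1}:vI)=\bigcap_{i=1}^{m}(I^{k+1}:vu_i),
\]
where $u_1,\ldots,u_m$ is any finite generating set of $I$. Since $\mathfrak{p}$ is prime and equals this finite intersection, one of the factors must equal $\mathfrak{p}$; that is, $\mathfrak{p}=(I^{k+1}:vu_i)$ for some $i$, which exhibits $\mathfrak{p}\in\mathrm{Ass}(R/I^{k+1})$. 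This is exactly the route taken in the paper. Your contradiction showing that some $w\in I$ satisfies $vw\notin I^{k+1}$ is correct but, by itself, only recovers the fact that the intersection above is proper, which is already clear since it equals $\mathfrak{p}$.
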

\begin{proof}
Fix $k\geq 1$, and choose an arbitrary element $\mathfrak{p} \in \mathrm{Ass}_S(S/I^k)$. This implies that $\mathfrak{p}=(I^k:_Sh)$ for some $h\in S$. Since $I$ satisfies the strong persistence property, we have $(I^{k+1}:_SI)=I^k$, and so $\mathfrak{p}=((I^{k+1}:_SI):_Sh)$. Let $\mathcal{G}(I)=\{u_1, \ldots, u_m\}$. Hence, one obtains 
$\mathfrak{p}=(I^{k+1}:_S
h\sum_{i=1}^mu_iS)=\cap_{i=1}^m(I^{k+1}:_Shu_i)$. Accordingly, we get $\mathfrak{p}=(I^{k+1}:_Shu_i)$ for some $1\leq i \leq m$. Therefore, $\mathfrak{p} \in \mathrm{Ass}_S(S/I^{k+1})$. This means that $I$ has the persistence property, as claimed. 
\end{proof}

To prove Theorem \ref{NTF2}, we need the following result. We state it here for ease of reference.

 \begin{theorem}\label{Embedded}\cite[Theorem 3.7]{SNQ}
Let $I$ be a square-free  monomial ideal in a polynomial ring $R=K[x_1, \ldots, x_n]$ over a field $K$ and $\mathfrak{m}=(x_1, \ldots, x_n)$. If there exists a square-free monomial  $v \in I$ such that $v\in \mathfrak{p}\setminus \mathfrak{p}^2$ for any $\mathfrak{p}\in \mathrm{Min}(I)$, and $\mathfrak{m}\setminus x_i \notin \mathrm{Ass}(R/(I\setminus x_i)^s)$ for all $s$ and $x_i \in \mathrm{supp}(v)$,  then the following statements hold:
\begin{itemize}
\item[(i)] $I$  is normally torsion-free. 
\item[(ii)]  $I$ is normal.
\item[(iii)]  $I$ has the strong persistence property. 
\item[(iv)]  $I$ has the persistence property. 
\item[(v)]  $I$ has the symbolic strong persistence property. 
\end{itemize}
\end{theorem}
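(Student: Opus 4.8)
The five conclusions are closely linked; I would first dispatch the formal implications and then concentrate on part (i), which carries the content. Since $I$ is square-free, (i) is equivalent to $I^{k}=I^{(k)}$ for all $k$, where $I^{(k)}=\bigcap_{\mathfrak{p}\in\mathrm{Min}(I)}\mathfrak{p}^{k}$. Assuming (i): each $\mathfrak{p}^{k}$ is a power of a monomial prime, hence integrally closed, so $I^{k}=I^{(k)}$ is integrally closed for all $k$, giving (ii). For (v), note the first hypothesis says exactly that, writing $\mathfrak{p}=(x_i:i\in A)$, the square-free monomial $v$ is divisible by precisely one variable $x_i$ with $i\in A$; hence $\mathfrak{p}^{k+1}:v=\mathfrak{p}^{k}$ for every $\mathfrak{p}\in\mathrm{Min}(I)$. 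Since $v\in I=I^{(1)}$ and a colon distributes over intersection in its first argument, $(I^{(k+1)}:I^{(1)})\subseteq(I^{(k+1)}:v)=\bigcap_{\mathfrak{p}}(\mathfrak{p}^{k+1}:v)=\bigcap_{\mathfrak{p}}\mathfrak{p}^{k}=I^{(k)}$, while $I^{(k)}I^{(1)}\subseteq I^{(k+1)}$ is automatic; thus $(I^{(k+1)}:I^{(1)})=I^{(k)}$, which is (v) --- and, notably, follows from the first hypothesis alone. Finally, granting (i), $(I^{k+1}:I)=(I^{(k+1)}:I^{(1)})=I^{(k)}=I^{k}$, which is (iii), and (iv) then follows from Proposition~\ref{SPPtoPP}.

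\emph{Reduction for (i).} I would prove $I^{(k)}=I^{k}$ by induction on $k$, the case $k=1$ being the definition of the symbolic power. Two observations drive the step. First (used below): if a monomial $m\in I^{(k)}$ is divisible by $v$, then for each $\mathfrak{p}\in\mathrm{Min}(I)$ we have $m\in\mathfrak{p}^{k}$ while $v$ uses exactly one variable of $\mathfrak{p}$, so $m/v\in\mathfrak{p}^{k-1}$; thus $m/v\in I^{(k-1)}=I^{k-1}$ by the inductive hypothesis, and $m\in v\cdot I^{k-1}\subseteq I^{k}$. Second, $I^{(k)}=I^{k}$ is equivalent to $I^{k}$ having no embedded prime; by the standard localization reduction --- setting to $1$ the variables outside a monomial prime $\mathfrak{q}$, an operation that commutes with powers --- together with an induction on the number of variables, this reduces to proving $\mathfrak{m}\notin\mathrm{Ass}(R/I^{k})$ for $\mathfrak{m}=(x_1,\dots,x_n)$; here one may assume $I\neq\mathfrak{m}$, so that no minimal prime of $I$ equals $\mathfrak{m}$.

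\emph{Excluding the maximal ideal.} Suppose $\mathfrak{m}\in\mathrm{Ass}(R/I^{k})$ and write $\mathrm{supp}(v)=\{x_{i_1},\dots,x_{i_t}\}$. I claim $\mathfrak{m}\in\mathrm{Ass}\bigl(R/(I^{k}:x_{i_1}\cdots x_{i_r})\bigr)$ for $r=0,\dots,t$, by induction on $r$ (the case $r=0$ being the assumption). For the step, set $Q=(I^{k}:x_{i_1}\cdots x_{i_r})$ and apply $\mathrm{Ass}$ to $0\to R/(Q:x_{i_{r+1}})\xrightarrow{\,x_{i_{r+1}}\,}R/Q\to R/(Q+(x_{i_{r+1}}))\to 0$, so $\mathfrak{m}\in\mathrm{Ass}(R/(Q:x_{i_{r+1}}))\cup\mathrm{Ass}(R/(Q+(x_{i_{r+1}})))$. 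For the second term, one checks in $R'=K[x_j:j\neq i_{r+1}]$ that $(Q+(x_{i_{r+1}}))/(x_{i_{r+1}})=(I\setminus x_{i_{r+1}})^{k}:(x_{i_1}\cdots x_{i_r})$ --- because a product of generators of $I$ dividing a monomial free of $x_{i_{r+1}}$ must itself be free of $x_{i_{r+1}}$ --- so its associated primes lie among those of $R'/(I\setminus x_{i_{r+1}})^{k}$, and the second hypothesis (applicable since $x_{i_{r+1}}\in\mathrm{supp}(v)$) excludes $\mathfrak{m}\setminus x_{i_{r+1}}$. Hence $\mathfrak{m}\in\mathrm{Ass}(R/(Q:x_{i_{r+1}}))=\mathrm{Ass}(R/(I^{k}:x_{i_1}\cdots x_{i_{r+1}}))$, finishing the induction. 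Taking $r=t$ gives $\mathfrak{m}\in\mathrm{Ass}(R/(I^{k}:v))$, so $(I^{k}:vh)=\mathfrak{m}$ for some monomial $h$; then $vh\notin I^{k}$, while $x_j\,vh\in I^{k}\subseteq I^{(k)}$ for all $j$. Given $\mathfrak{p}\in\mathrm{Min}(I)$, choosing $x_j\notin\mathfrak{p}$ forces $vh\in\mathfrak{p}^{k}$; hence $vh\in I^{(k)}$, and as $v\mid vh$ the first observation yields $vh\in I^{k}$ --- a contradiction. So $\mathfrak{m}\notin\mathrm{Ass}(R/I^{k})$, and (i) is proved.

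\emph{The main obstacle.} The step I expect to demand real care is the localization reduction above: one must verify that both hypotheses descend from $I$ to each restriction $I_{\mathfrak{q}}$ (variables outside $\mathfrak{q}$ set to $1$) --- notably, that the image of $v$ still meets every minimal prime of $I_{\mathfrak{q}}$ in exactly one variable, and that the deletions of $I_{\mathfrak{q}}$ retain the property in the second hypothesis --- so that the induction on the number of variables can run. Since minimal vertex covers restrict badly, this inheritance is the delicate bookkeeping; by contrast the short exact sequences, the identity $(Q+(x_{i_{r+1}}))/(x_{i_{r+1}})=(I\setminus x_{i_{r+1}})^{k}:(x_{i_1}\cdots x_{i_r})$, and the computations at minimal primes are all routine.
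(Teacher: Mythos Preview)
This result is not proved in the present paper: it is quoted from \cite[Theorem~3.7]{SNQ} and stated only ``for ease of reference,'' so there is no in-paper proof to compare against.

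On the attempt itself: your reductions of (ii)--(v) to (i) are correct, and the argument excluding $\mathfrak{m}$ from $\mathrm{Ass}(R/I^{k})$ --- the inner induction on $r$ via the short exact sequences, the identity $(Q+(x_{i_{r+1}}))/(x_{i_{r+1}})=(I\setminus x_{i_{r+1}})^{k}:(x_{i_1}\cdots x_{i_r})$, and the final contradiction from $v\mid vh$ together with the inductive hypothesis on $k$ --- is sound.

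The real problem is the step you flag, but it is more than ``delicate bookkeeping.'' For your induction on the number of variables to run, both hypotheses must pass from $I$ to the monomial localization $I_{\mathfrak{q}}$ (variables outside $\mathfrak{q}$ set to~$1$). The first hypothesis does descend: every $\mathfrak{p}\in\mathrm{Min}(I_{\mathfrak{q}})$ is a minimal prime of $I$ contained in $\mathfrak{q}$, so the image $v'$ of $v$ still meets each such $\mathfrak{p}$ in exactly one variable. The second hypothesis, however, does not descend. Since $I_{\mathfrak{q}}\setminus x_{i}=(I\setminus x_{i})_{\mathfrak{q}}$ and associated primes correspond under this localization, what you would need for $I_{\mathfrak{q}}$ is that $\mathfrak{q}\setminus x_{i}\notin\mathrm{Ass}\bigl(R'/(I\setminus x_{i})^{s}\bigr)$ for each $x_{i}\in\mathrm{supp}(v')$; but the stated hypothesis only rules out the single prime $\mathfrak{m}\setminus x_{i}$, not the strictly smaller prime $\mathfrak{q}\setminus x_{i}$. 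Nothing in the statement lets you trade one for the other, so the induction on $n$ cannot be closed as written and (i) remains unproved. Note that in the paper's only application of this theorem (the proof of Theorem~\ref{NTF2}) one actually establishes that each $I\setminus x_{i}$ is normally torsion-free; under that stronger hypothesis the descent is immediate --- being normally torsion-free is preserved under monomial localization --- and your outline would become a complete proof.
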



The next theorem will be used frequently to formulate proofs of some main results of this paper. It provides a way to create new normally torsion-free ideals based on the existing ones. 

\begin{theorem} \label{NTF2}
Let $I$ be a normally torsion-free square-free monomial ideal in a polynomial ring  $R = K[x_1, \ldots, x_n]$ and $h$ be a square-free monomial in $R$. Let there exist two  variables $x_r$ and $x_s$  with $1\leq  r\neq s  \leq n$ such  
that  $\gcd(h, u)=1$ or $\gcd(h, u)=x_r$  or  $\gcd(h, u)=x_rx_s$  for all  $u \in  \mathcal{G}(I)$.  Then the following statements hold:
\begin{itemize}
\item[(i)] $I+hR$  is normally torsion-free. 
\item[(ii)] $I+hR$  is nearly normally torsion-free. 
\item[(iii)]  $I+hR$ is normal.
\item[(iv)]  $I+hR$ has the strong persistence property. 
\item[(v)]  $I+hR$ has the persistence property. 
\item[(vi)]  $I+hR$ has the symbolic strong persistence property.
\end{itemize}
\end{theorem}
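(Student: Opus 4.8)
The plan is to derive (i), (iii), (iv), (v) and (vi) simultaneously by checking that the ideal $J:=I+hR$ satisfies the hypotheses of Theorem~\ref{Embedded} with the square-free monomial $v:=h$; statement (ii) then follows automatically, since a normally torsion-free ideal $J$ has $\mathrm{Ass}(R/J^m)=\mathrm{Min}(J)$ for all $m\geq 1$ and is therefore trivially nearly normally torsion-free. Since $h\in J$, the two things that must be verified are: (a) $h\in\mathfrak{q}\setminus\mathfrak{q}^2$ for every $\mathfrak{q}\in\mathrm{Min}(J)$; and (b) $\mathfrak{m}\setminus x_i\notin\mathrm{Ass}(R/(J\setminus x_i)^s)$ for all $s\geq 1$ and all $x_i\in\mathrm{supp}(h)$.

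For (a), since $h$ is square-free the condition says precisely that every minimal prime of $J$ contains exactly one variable of $\mathrm{supp}(h)$. I would identify $\mathrm{Min}(J)$ with the minimal vertex covers of the hypergraph with edge set $\mathcal{E}:=\{\mathrm{supp}(u):u\in\mathcal{G}(I)\}\cup\{\mathrm{supp}(h)\}$, take a minimal cover $W$, and argue by contradiction: if $W$ meets $\mathrm{supp}(h)$ in two distinct variables $a,b$, then minimality of $W$ forces each of $a,b$ to be the unique element of $W$ lying in some edge, and that edge cannot be $\mathrm{supp}(h)$ (which $W$ meets in at least two points), so it is $\mathrm{supp}(u)$ for some $u\in\mathcal{G}(I)$; since $a\in\mathrm{supp}(h)\cap\mathrm{supp}(u)=\mathrm{supp}(\gcd(h,u))\subseteq\{x_r,x_s\}$, and likewise for $b$, we get $\{a,b\}=\{x_r,x_s\}$. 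Applying minimality once more to $x_s$, there is $u'\in\mathcal{G}(I)$ with $W\cap\mathrm{supp}(u')=\{x_s\}$; but $x_s\mid\gcd(h,u')$, and the only element of $\{1,x_r,x_rx_s\}$ divisible by $x_s$ is $x_rx_s$, so $x_r\mid u'$ and hence $x_r\in W\cap\mathrm{supp}(u')$, contradicting $W\cap\mathrm{supp}(u')=\{x_s\}$. This is exactly the point at which the asymmetric shape of the hypothesis -- that $\gcd(h,u)$ may equal $x_r$ or $x_rx_s$ but never $x_s$ alone -- is essential, and I expect it to be the step needing the most care.

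For (b), the first observation is that $J\setminus x_i=I\setminus x_i$ for $x_i\in\mathrm{supp}(h)$: indeed $x_i\mid h$, and for $u\in\mathcal{G}(I)$ the condition $x_i\nmid u$ already forces $h\nmid u$, so the generators of $J$ not divisible by $x_i$ are exactly those of $I$ not divisible by $x_i$. Since normal torsion-freeness is inherited by the deletion $I\setminus x_i$, this ideal is again normally torsion-free, so $\mathrm{Ass}(R/(I\setminus x_i)^s)=\mathrm{Min}(I\setminus x_i)$ for all $s$; hence (b) can fail only if $\mathfrak{m}\setminus x_i\in\mathrm{Min}(I\setminus x_i)$, i.e. only if $I\setminus x_i=\mathfrak{m}\setminus x_i$. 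That equality forces every variable other than $x_i$ to be a minimal generator of $I$, so $I=\mathfrak{m}$ or $I=\mathfrak{m}\setminus x_i$; but in the first case $J=\mathfrak{m}$, and in the second $J=I+hR$ equals $\mathfrak{m}$ (if $h=x_i$) or equals $I$ (if $h\neq x_i$, since then a variable $x_k\neq x_i$ dividing $h$ already lies in $I$). In each of these exceptional cases $J$ is a monomial prime, for which (i)-(vi) all hold trivially, so one may assume we are not in them; then (b) holds, Theorem~\ref{Embedded} applies to $J$, and the proof is complete.

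In short, the work is concentrated in two places: the minimal-vertex-cover argument for (a), where the precise list of admissible values of $\gcd(h,u)$ has to be exploited, and the bookkeeping in (b) that isolates the degenerate configurations in which $J$ collapses to a monomial prime, so that in the principal case the hypotheses of Theorem~\ref{Embedded} are genuinely met.
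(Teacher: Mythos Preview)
Your proposal is correct and follows essentially the same route as the paper: reduce everything to Theorem~\ref{Embedded} with $v=h$, verify $h\in\mathfrak{p}\setminus\mathfrak{p}^2$ for each minimal prime using the constraint on $\gcd(h,u)$, observe $J\setminus x_i=I\setminus x_i$ is normally torsion-free, and dispose of the degenerate cases where $J$ collapses to a monomial prime. Your minimal-vertex-cover phrasing for part (a) is a bit more streamlined than the paper's three-case analysis on whether $x_r,x_s\in\mathfrak{p}$, and you isolate the degenerate configurations at the end rather than at the start, but the substance is identical.
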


\begin{proof}
(i)  For convenience of notation, put $L:=I+hR$. 
If  $L\setminus x_k=\mathfrak{m}\setminus x_k$ for some $1\leq k \leq n$, then  one can write  $L=x_kJ+\mathfrak{m}\setminus x_k$. If $J=R$, then $L=\mathfrak{m}$, and there is nothing to prove. Let  $J\neq R$, and take  an arbitrary   element $v\in \mathcal{G}(J)$. If $x_\ell \mid v$ for some $\ell \in \{1, \ldots, n\}\setminus \{k\}$, then $v\in \mathfrak{m}\setminus x_k$, and so $J\subseteq \mathfrak{m}\setminus x_k$. This implies that $L=\mathfrak{m}\setminus x_k$, and hence the assertion holds. 
We thus  assume that   $L\setminus x_k \neq \mathfrak{m}\setminus x_k$ for all  $k=1, \ldots,  n$. 
 We claim that   $h\in \mathfrak{p}\setminus \mathfrak{p}^2$ for any $\mathfrak{p}\in \mathrm{Min}(L)$. Take an arbitrary element $\mathfrak{p}\in \mathrm{Min}(L)$. 
 Since $h\in L$ and $L\subseteq \mathfrak{p}$, one has $h \in \mathfrak{p}$.
   Suppose, on the contrary, that $h\in \mathfrak{p}^2$. Due to $h$ is square-free, this gives that $|\mathrm{supp}(h) \cap \mathrm{supp} (\mathfrak{p})| \geq 2$. We observe the following:\\
(i)  If $x_s \in \mathrm{supp}(u)$ for some $u \in \mathcal{G}(I)$, then $x_r \in \mathrm{supp}(u)$ as well. It is due to the assumption on $\gcd(h,u)$ with $u \in \mathcal{G}(I)$.  \\
(ii) At most one of $x_r$ and $x_s$ can be in $\mathrm{supp}(\mathfrak{p})$. Indeed, if both $x_r,x_s \in \mathrm{supp} (\mathfrak{p})$, then $x_r,x_s \in \mathrm{supp}(h) \cap \mathrm{supp} (\mathfrak{p})$. From (i), we see that $u \in \mathfrak{p}\setminus \{x_s\}$ for all $u \in \mathcal{G}(I)$. Also, $h \in \mathfrak{p}\setminus \{x_s\}$. Hence, $L \subset \mathfrak{p}\setminus \{x_s\}$, a contradiction to the minimality of $\mathfrak{p}$. 

In order to establish our claim, we have the following cases to discuss:
\vspace{1mm}

\textbf{Case 1.}  $x_r \in \mathfrak{p}$. Take any $x _i \in \mathrm{supp}(h) \cap \mathrm{supp} (\mathfrak{p})$ such that $x_r \neq x_i$. Then $x_s\neq x_i$ due to (ii).  From  the assumption on $\gcd(h,u)$ with $u \in \mathcal{G}(I)$ it follows that $x_i \notin \mathrm{supp}(I)$. Therefore, $I \subset \mathfrak{p}\setminus \{x_i\} $. Since $ h \in \mathfrak{p}\setminus \{x_i\} $, we conclude that $L \subset \mathfrak{p}\setminus \{x_i\} $, a contradiction to the minimality of $\mathfrak{p}$.
\vspace{1mm}

\textbf{Case 2.}  $x_s \in \mathfrak{p}$. By mimicking the same argument as in Case 1, we again obtain a contradiction to the minimality of $\mathfrak{p}$.
\vspace{1mm}

\textbf{Case 3.}  $x_r \notin \mathfrak{p}$ and $x_s \notin \mathfrak{p}$. Take any $x_i, x_j \in \mathrm{supp}(h) \cap \mathrm{supp} (\mathfrak{p})$. Then $x_i , x_j \notin \mathrm{supp}(I)$,  due to the assumption on $\gcd(h,u)$ with $u \in \mathcal{G}(I)$. It yields that $I \subset \mathfrak{p}\setminus \{x_i\} $. Since $h \in \mathfrak{p}\setminus \{x_i\} $, we conclude that $L \subset \mathfrak{p}\setminus \{x_i\} $, again a contradiction to the minimality of $\mathfrak{p}$.

This shows that our claim holds true. 
To complete the proof,  note that for all  $x_k \in \mathrm{supp}(h)$,  one has    $L\setminus x_k=I\setminus x_k$.  Based on 
\cite[Theorem 3.21]{SN}, we gain $I\setminus x_k$ is normally torsion-free as well. This leads to  $L\setminus x_k$ is normally torsion-free.       Fix $s\geq 1$.   Suppose, on the contrary, that 
$\mathfrak{m}\setminus x_k \in \mathrm{Ass}(R/(L\setminus x_k)^s)$ for some $ k$. Because 
$\mathrm{Ass}(R/(L\setminus x_k)^s)=\mathrm{Min}(L\setminus x_k)$, we get  $\mathfrak{m}\setminus x_k \in\mathrm{Min}(L\setminus x_k)$, and so $L\setminus x_k=\mathfrak{m}\setminus x_k,$ which is a contradiction. Therefore, $\mathfrak{m}\setminus x_i \notin \mathrm{Ass}(R/(I\setminus x_i)^s)$ for all $s$ and $x_i \in \mathrm{supp}(h)$.   Consequently, the assertion can be concluded readily from Theorem \ref{Embedded}. \par   
 (ii) It is well-known, by \cite{NQKR},  that normally torsion-freeness implies 
 nearly normally torsion-freeness. \par  
 (iii) In view of   \cite[Theorem 1.4.6]{HH1}, every normally torsion-free square-free monomial ideal is normal. Hence, the  claim  can be deduced from (i).  \par 
(iv) According to   \cite[Theorem 6.2]{RNA}, every normal monomial ideal has the strong persistence property. Thus, the assertion  follows readily from (iii). \par 
(v)  By  Proposition \ref{SPPtoPP},  the strong persistence property implies the persistence property. Therefore, we 
  can  conclude  the claim  from (iv). \par 
(vi)  According to   \cite[Theorem 5.1]{NKRT}, every square-free monomial ideal has the  symbolic strong persistence property, and  so  the assertion  holds. 
\end{proof}


As an  immediate consequence of Theorem \ref{NTF2}, we give the following corollary. 

\begin{corollary} \label{Path.NTF}
The path ideals corresponding to path graphs of length two  are normally torsion-free.
\end{corollary}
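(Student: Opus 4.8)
The statement to be proved is that, for every $n$, the $3$-path ideal
\[
I_3(P_n)=(x_ix_{i+1}x_{i+2}\ :\ 1\le i\le n-2)\subseteq R=K[x_1,\dots,x_n]
\]
of the path graph $P_n$ on the vertices $1,2,\dots,n$ is normally torsion-free. The plan is to induct on $n$ and to pass from $P_{n-1}$ to $P_n$ by a single application of Theorem~\ref{NTF2}.

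For the base of the induction, when $n\le 3$ the ideal $I_3(P_n)$ is either $(0)$ or the principal ideal $(x_1x_2x_3)$. A principal square-free monomial ideal $(f)$ with $f=x_{i_1}\cdots x_{i_r}$ satisfies $(f)^k=(f^k)=\bigcap_{j=1}^{r}(x_{i_j}^{k})=(f)^{(k)}$ for all $k\ge 1$, and is therefore normally torsion-free (recall that for a square-free monomial ideal normal torsion-freeness is equivalent to $I^k=I^{(k)}$ for all $k$); the zero ideal is trivially normally torsion-free.

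For the inductive step, let $n\ge 4$ and assume that $I_3(P_{n-1})$ is normally torsion-free. First I would note that, regarded as an ideal of $R=K[x_1,\dots,x_n]$, $I_3(P_{n-1})$ stays normally torsion-free, since for a square-free monomial ideal the equality $I^k=I^{(k)}$ is unaffected by adjoining a variable that does not occur in any generator. Then I would write
\[
I_3(P_n)=I_3(P_{n-1})+hR,\qquad h:=x_{n-2}x_{n-1}x_n,
\]
and apply Theorem~\ref{NTF2} with $x_r:=x_{n-2}$ and $x_s:=x_{n-1}$. For a generator $u=x_ix_{i+1}x_{i+2}$ of $I_3(P_{n-1})$ (so $1\le i\le n-3$), the supports $\{i,i+1,i+2\}$ and $\{n-2,n-1,n\}$ meet in $\{n-2,n-1\}$ if $i=n-3$, in $\{n-2\}$ if $i=n-4$, and are disjoint if $i\le n-5$; hence $\gcd(h,u)\in\{1,\,x_r,\,x_rx_s\}$ for every $u\in\mathcal{G}(I_3(P_{n-1}))$. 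Theorem~\ref{NTF2}(i) then yields that $I_3(P_n)=I_3(P_{n-1})+hR$ is normally torsion-free, which closes the induction.

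Once Theorem~\ref{NTF2} is available there is no real obstacle: the argument is a routine induction. The only steps that need a small amount of care are the finite $\gcd$ verification in the inductive step — which succeeds precisely because any two distinct $3$-paths of $P_n$ overlap in at most two consecutive vertices, so the overlap of the newly added generator $h$ with every earlier generator is one of $1$, $x_{n-2}$, $x_{n-2}x_{n-1}$ — and the (standard) observation that normal torsion-freeness of a square-free monomial ideal is insensitive to adjoining unused variables to the ambient polynomial ring.
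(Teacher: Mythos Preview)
Your proof is correct and follows essentially the same route as the paper's: induction on $n$, writing $I_3(P_n)=I_3(P_{n-1})+hR$ with $h=x_{n-2}x_{n-1}x_n$, verifying that $\gcd(h,u)\in\{1,x_{n-2},x_{n-2}x_{n-1}\}$ for all $u\in\mathcal{G}(I_3(P_{n-1}))$, and invoking Theorem~\ref{NTF2}. Your write-up is in fact a bit more careful than the paper's, spelling out the base cases and the insensitivity of normal torsion-freeness to adjoining an unused variable.
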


\begin{proof}
Let $P=(V(P), E(P))$ denote a   path graph with the vertex set 
$V(P)=\{x_1, \ldots, x_n\}$ and the edge set 
$E(P)=\{\{x_i, x_{i+1}\}~:~ i=1, \ldots, n-1\} \cup \{\{x_n, x_1\}\}.$ Hence, the path ideal corresponding to the path graph $P$  of length two  is given by 
$$L:=(x_ix_{i+1}x_{i+2}~:~i=1, \ldots, n-2).$$ We proceed by induction on $n$. If $n=3$, then $L=(x_1x_2x_3)$, and there is nothing to prove. Let $n>3$ and the claim has been proven for $n-1$. Set $h:=x_{n-2}x_{n-1}x_n$ and 
$I:=(x_ix_{i+1}x_{i+2}~:~i=1, \ldots, n-3).$ One can easily check that, for each $u\in \mathcal{G}(I)$, we have $\gcd(h,u)=1$ or $\gcd(h,u)=x_{n-2}$ or $\gcd(h,u)=x_{n-2}x_{n-1}.$ It  follows from the induction hypothesis that $I$ is normally torsion-free. Since     $L=I+hR$, where $R=K[x_1, \ldots, x_n]$, we can derive the assertion from  Theorem \ref{NTF2}. 
 \end{proof}


As an application of Theorem \ref{NTF2}, we give the following lemma. 

\begin{lemma}\label{Whisker1}
Let $G=(V(G), E(G))$ and  $H=(V(H), E(H))$ be two finite simple graphs such that $V(H)=V(G)\cup \{w\}$ with $w\notin V(G)$,  and $E(H)=E(G) \cup \{\{v,w\}\}$ for some vertex $v\in V(G)$. If $NI(G)$ is normally torsion-free and $\prod_{j\in N_{G}[v]}x_j \notin \mathcal{G}(NI(G))$, then $NI(H)$ is normally torsion-free.
\end{lemma}

\begin{proof}
Let $NI(G)$ be  normally torsion-free. It is routine to check that 
$NI(H)= NI(G)+(x_vx_w)R$, where $R=K[x_{\alpha}~:~ \alpha \in V(H)]$. In addition, one can easily see that either  
$\mathrm{gcd}(x_vx_w, u)=1$  or $\mathrm{gcd}(x_vx_w, u)=x_v$ for all $u\in \mathcal{G}(NI(G))$. Therefore, the claim follows immediately from Theorem \ref{NTF2}.
\end{proof}

 
 We are ready to state  the first  main result  of this paper as an immediate corollary of Theorem~\ref{NTF2} and Lemma~\ref{Whisker1}.
 
\begin{corollary}\label{star graph1}
The closed neighborhood ideals of star graphs  are normally torsion-free.
\end{corollary}
\begin{proof}
Proceed by induction on the number of vertices  and use Lemma \ref{Whisker1}. 
\end{proof}

 
In what follows, we investigate the closed  neighborhood ideals related to  the whisker graph and cone of a graph.

\begin{definition}\cite[Definition 7.3.10]{V1}
 Let $G_0$ be a graph on the vertex set 
 $Y = \{y_1, \ldots,  y_n\}$ 
and take a new set of variables 
$X = \{x_1, \ldots , x_n\}$. The {\it whisker graph} or
{\it suspension} of $G_0$, denoted by $G_0 \cup  W(Y )$, is the graph obtained from $G_0$ by attaching to each vertex $y_i$ a new vertex $x_i$ and the edge $\{x_i, y_i\}$. The
edge $\{x_i, y_i\}$ is called a {\it whisker}.
\end{definition}
 
 \begin{question} \label{suspension}
 (i) Can we conclude that the closed neighborhood ideals of trees are normally torsion-free?
 
 (ii)  Let $G_0$ be a graph and let $H:=G_0 \cup  W(Y )$  be its
whisker graph. If $NI(G_0)$ is normally torsion-free, then can we deduce that  $NI(H)$ is normally torsion-free?
 \end{question}
 
 
 \begin{definition} \cite[Definition 10.5.4]{V1}
  The {\it cone} $C(G)$, over the graph $G$, is obtained by adding a new vertex $t$ to $G$ and joining every vertex of $G$ to $t$. 
  \end{definition} 
 
 \begin{lemma} \label{cone-NI}
  Let $G$ be a graph and let $H:=C(G)$  be its
cone. Then  the following statements hold:
\begin{itemize}
\item[(i)] $NI(G)$  is normally torsion-free if and only if $NI(H)$  is normally torsion-free. 
\item[(ii)] $NI(G)$  is nearly normally torsion-free if and only if $NI(H)$  is nearly normally torsion-free. 
\item[(iii)]  $NI(G)$  is normal if and only if $NI(H)$  is normal.
\item[(iv)]  $NI(G)$ has the  strong persistence property if and only if  $NI(H)$  has the strong persistence property. 
\item[(v)]  $NI(G)$ has the  persistence property if and only if  $NI(H)$  has the persistence property.  
\item[(vi)]  Both $NI(G)$   and $NI(H)$ have  the symbolic strong persistence property. 
\end{itemize}
 \end{lemma}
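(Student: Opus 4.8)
The plan is to reduce the lemma to a single structural observation together with one monomial computation. \emph{Step 1 (reduce to multiplication by a variable).} Write $R=K[x_y:y\in V(G)]$ and $S=R[x_t]$, where $t$ is the cone point of $H=C(G)$. Since $N_H[v]=N_G[v]\cup\{t\}$ for every $v\in V(G)$ and $N_H[t]=\{t\}\cup V(G)$, each generator of $NI(G)$ gets multiplied by $x_t$, while the generator $x_t\prod_{y\in V(G)}x_y$ coming from $t$ is a multiple of the others and hence redundant. Thus $NI(H)=x_t\,NI(G)$ in $S$. Setting $I:=NI(G)$ and $J:=NI(H)=x_tI$ (both square-free monomial ideals), the lemma becomes a comparison of $I\subseteq R$ with $x_tI\subseteq R[x_t]$.

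\emph{Step 2 (the key identities).} I would first record, for every $k\ge 1$, the combinatorial identities
\[
J^k=x_t^kI^k=(x_t^k)\cap I^kS,\qquad \overline{J^k}=x_t^k\,\overline{I^k}\,S,\qquad (J^{k+1}:_SJ)=x_t^k\,(I^{k+1}:_RI)\,S,
\]
each obtained by comparing exponent vectors of monomials (the integral closure via the Newton polyhedron of $x_t^kI^k$, which is $\mathrm{NP}(I^k)\times[k,\infty)$; the colon by colon-ing $x_t^{k+1}I^{k+1}$ by each $x_tu$ with $u\in\mathcal G(I)$ and intersecting). The heart of the argument is the associated-primes formula
\[
\mathrm{Ass}_S(S/J^k)=\{(x_t)\}\cup\{\mathfrak pS:\mathfrak p\in\mathrm{Ass}_R(R/I^k)\}\qquad(k\ge 1).
\]
The inclusion ``$\subseteq$'' follows from $J^k=(x_t^k)\cap I^kS$ together with $\mathrm{Ass}_S(S/(A\cap B))\subseteq\mathrm{Ass}_S(S/A)\cup\mathrm{Ass}_S(S/B)$, the fact $\mathrm{Ass}_S(S/(x_t^k))=\{(x_t)\}$, and the standard polynomial-extension fact $\mathrm{Ass}_S(S/I^kS)=\{\mathfrak pS:\mathfrak p\in\mathrm{Ass}_R(R/I^k)\}$. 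For ``$\supseteq$'' one exhibits witnesses: if $g\in\mathcal G(I^k)$ then $(J^k:_Sx_t^{k-1}g)=(x_t)$, and if $\mathfrak p=(I^k:_Rg')$ for a monomial $g'$ then $(J^k:_Sx_t^kg')=\mathfrak pS$.

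\emph{Step 3 (deducing (i)--(vi)).} Since $I$ is square-free, $\mathrm{Ass}_S(S/J)=\{(x_t)\}\cup\{\mathfrak pS:\mathfrak p\in\mathrm{Min}(I)\}=\mathrm{Min}(J)$, the map $\mathfrak p\mapsto\mathfrak pS$ is injective, and $(x_t)\in\mathrm{Ass}_S(S/J^k)$ for every $k$. Hence $\mathrm{Ass}_S(S/J^k)\subseteq\mathrm{Ass}_S(S/J)$ for all $k$ if and only if $\mathrm{Ass}_R(R/I^k)\subseteq\mathrm{Ass}_R(R/I)$ for all $k$, which is (i); matching the two-block description in the definition of nearly normally torsion-freeness (the candidate embedded prime $\mathfrak p$ for $I$ corresponding to $\mathfrak pS$ for $J$ and conversely) gives (ii); and $\mathrm{Ass}_S(S/J^k)\subseteq\mathrm{Ass}_S(S/J^{k+1})$ for all $k$ if and only if $\mathrm{Ass}_R(R/I^k)\subseteq\mathrm{Ass}_R(R/I^{k+1})$ for all $k$, which is (v). From $\overline{J^k}=x_t^k\overline{I^k}S$ one gets $\overline{J^k}=J^k\iff\overline{I^k}=I^k$, so $J$ is normal iff $I$ is, i.e.\ (iii); from $(J^{k+1}:_SJ)=x_t^k(I^{k+1}:_RI)S$ and $J^k=x_t^kI^kS$ one gets $(J^{k+1}:_SJ)=J^k\iff(I^{k+1}:_RI)=I^k$, so the strong persistence property transfers, i.e.\ (iv). Finally (vi) requires no transfer: $I$ and $J$ are both square-free monomial ideals, hence both have the symbolic strong persistence property by \cite[Theorem 5.1]{NKRT}.

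\emph{Main obstacle.} All the work is concentrated in the associated-primes formula of Step 2, and within it the only delicate point is the inclusion ``$\supseteq$'': one must produce the explicit colon witnesses for $(x_t)$ and for each $\mathfrak pS$, and check that no spurious prime containing $x_t$ other than $(x_t)$ itself can arise (this is where the power of $x_t$ must be tracked carefully). Once this is settled, Steps 1 and 3 are routine translations, and the remaining identities in Step 2 are straightforward exponent-vector bookkeeping.
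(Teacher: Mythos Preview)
Your argument is correct and shares the paper's first step verbatim: both observe that $NI(H)=x_t\,NI(G)$ because the generator coming from the apex is redundant. After that the two proofs diverge in presentation rather than substance. The paper simply cites a separate literature lemma for each of (i)--(vi) (\cite[Lemma~3.12]{SN}, \cite[Lemma~3.6]{NQKR}, \cite[Remark~1.2]{ANR}, \cite[Lemma~4.5]{RNA}, \cite[Theorem~5.2]{KHN2}, \cite[Theorem~5.1]{NKRT}), each of which says precisely that the relevant property is preserved under multiplication by a fresh variable. You instead reprove those transfer facts from scratch via the identities $J^k=(x_t^k)\cap I^kS$, $\overline{J^k}=x_t^k\overline{I^k}S$, $(J^{k+1}:J)=x_t^k(I^{k+1}:I)S$, and the explicit description $\mathrm{Ass}_S(S/J^k)=\{(x_t)\}\cup\{\mathfrak pS:\mathfrak p\in\mathrm{Ass}_R(R/I^k)\}$. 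Your route is more self-contained and makes the mechanism transparent (at the cost of writing out the witness computations); the paper's route is shorter but externalizes all the content to the cited references. For (vi) the two proofs coincide, both invoking \cite[Theorem~5.1]{NKRT}.
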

 
 \begin{proof}
 Assume that the   cone $H=C(G)$  is obtained by adding the new vertex $w$ to $G$ and joining every vertex of $G$ to $w$.
 Then one can easily see that 
 $$NI(H)=x_wNI(G)+(x_w\prod_{i\in V(G)}x_i).$$
  Since $\prod_{i\in V(G)}x_i \in NI(G)$, this implies that 
  $NI(H)=x_wNI(G)$. 
  
 (i)  This claim can be deduced from 
 \cite[Lemma 3.12]{SN}. \par 
 (ii) On account of \cite[Lemma 3.6]{NQKR}, one can derive this claim. \par  
 (iii) We can conclude this assertion by virtue of  \cite[Remark 1.2]{ANR}. \par 
 (iv) This claim is an immediate consequence of \cite[Lemma 4.5]{RNA}. \par 
 (v) Due to \cite[Theorem 5.2]{KHN2}, we can deduce this assertion.\par 
(vi) This claim follows readily from  \cite[Theorem 5.1]{NKRT}. 
   \end{proof}

   
  We recall the following definition which will be used in the proof of Lemma~\ref{cone-DI}.
 \begin{definition}  \cite[Definition 2.1]{N2}
Let $I\subset R=K[x_1, \ldots, x_n]$ be a monomial ideal with $\mathcal{G}(I)=\{u_1, \ldots, u_m\}$. Then $I$ is said to be {unisplit}, if there exists $u_i \in \mathcal{G}(I)$ such that $\gcd(u_i,u_j)=1$ for all $u_j \in \mathcal{G}(I)$ with $i \neq j$.
   
   \end{definition}
 \begin{lemma} \label{cone-DI}
  Let $G$ be a graph and let $H:=C(G)$  be its
cone. Then  the following statements hold:
\begin{itemize}
\item[(i)] $DI(G)$  is normally torsion-free if and only if $DI(H)$  is normally torsion-free. 
\item[(ii)] $DI(G)$  is nearly normally torsion-free if and only if $DI(H)$  is nearly normally torsion-free. 
\item[(iii)]  $DI(G)$  is normal if and only if $DI(H)$  is normal.
\item[(iv)]   $DI(H)$  has the strong persistence property. 
\item[(v)]   $DI(H)$  has the persistence property.  
\item[(vi)]  Both $DI(G)$ and   $DI(H)$  have  the symbolic strong persistence property.
\end{itemize}
 \end{lemma}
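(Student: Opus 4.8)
The starting point is the same structural observation used for $NI$: if $H=C(G)$ is obtained by adjoining a new vertex $w$ adjacent to every vertex of $G$, then $\{w\}$ is itself a dominating set of $H$, and every minimal dominating set of $H$ is either $\{w\}$ or a minimal dominating set of $G$ (a dominating set of $G$ together with $w$ is never minimal, since $\{w\}$ already dominates $H$, and conversely a dominating set of $H$ not containing $w$ must dominate $w$, hence contain a neighbour of $w$, i.e. some vertex of $G$, and in fact must dominate all of $G$). Consequently
\[
DI(H) = x_w R + DI(G),
\]
where $R = K[x_\alpha : \alpha \in V(H)]$. In particular $DI(H)$ is a monomial ideal with a generator, namely $x_w$, whose support is disjoint from the support of every other generator, so $DI(H)$ is \emph{unisplit}; moreover $x_w$ divides no other generator, which is exactly the hypothesis of Theorem~\ref{NTF2} in the degenerate form $\gcd(x_w,u)=1$ for all $u\in\mathcal G(DI(G))$.

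From here each item follows by quoting an existing transfer result for ideals of the form $I + x_wR$ with $x_w\notin\operatorname{supp}(I)$, exactly paralleling the proof of Lemma~\ref{cone-NI}. For (i), normal torsion-freeness of $DI(H)$ from that of $DI(G)$ is the ``only if'' is immediate since $DI(G)=(DI(H)\setminus x_w)$ up to relabelling and normal torsion-freeness passes to $I\setminus x_k$ by \cite[Theorem 3.21]{SN}; the ``if'' direction is Theorem~\ref{NTF2}(i) (or \cite[Lemma 3.12]{SN}). Item (ii) follows from \cite[Lemma 3.6]{NQKR}, item (iii) from \cite[Remark 1.2]{ANR}. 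For (iv) and (v), note that $DI(H)=x_wR+DI(G)$ satisfies the hypotheses of Theorem~\ref{NTF2} with $h=x_w$ (the case $\gcd(h,u)=1$ always), so Theorem~\ref{NTF2}(iv),(v) give the strong persistence and persistence properties of $DI(H)$ \emph{unconditionally}, which is why the lemma states these two parts only for $H$ and not as equivalences. Item (vi) is \cite[Theorem 5.1]{NKRT}, since every square-free monomial ideal — and $DI(G)$, $DI(H)$ are square-free — has the symbolic strong persistence property; alternatively one may invoke that $DI(H)$ is unisplit together with \cite[Definition 2.1]{N2} and the associated result on unisplit ideals.

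The only genuine content is the combinatorial identity $DI(H)=x_wR+DI(G)$, and in particular the claim that the minimal dominating sets of $C(G)$ are precisely $\{w\}$ together with the minimal dominating sets of $G$; the mild subtlety there is that one must check no minimal dominating set of $H$ other than $\{w\}$ contains $w$ (a set containing $w$ dominates everything, so it is minimal only if it equals $\{w\}$) and that a dominating set of $H$ avoiding $w$ must already dominate $w$, forcing it to contain a vertex of $G$ adjacent to $w$, i.e. an arbitrary vertex of $G$, after which one checks it dominates $G$ itself. Once this identity is in hand, every item is a one-line citation, so I do not expect any real obstacle beyond writing the set-theoretic verification carefully.
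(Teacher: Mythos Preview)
Your structural identity $DI(H)=DI(G)+(x_w)$ is correct, and your combinatorial justification is fine. The paper obtains it by citing \cite[Lemma 2.2]{SM} and duality. Parts (i) and (vi) of your argument are essentially the paper's.

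There is, however, a genuine gap in (iv) and (v). You appeal to Theorem~\ref{NTF2}(iv),(v), but that theorem has as a standing hypothesis that $I$ be normally torsion-free; here $I=DI(G)$, and the lemma asserts the strong persistence of $DI(H)$ \emph{unconditionally}. So Theorem~\ref{NTF2} does not apply. You actually have the correct tool in hand---you observe that $DI(H)$ is unisplit---but you invoke it only as an alternative for (vi). The paper uses exactly this: since $x_w\in\mathcal G(DI(H))$ has $\gcd(x_w,u)=1$ for every other generator, $DI(H)$ is unisplit, and then \cite[Theorems 2.10 and 3.1]{N2} give the strong persistence property directly; (v) follows from Proposition~\ref{SPPtoPP}.

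A second, smaller issue: for (ii) and (iii) you transplant the citations from Lemma~\ref{cone-NI}, but that lemma concerns the relation $NI(H)=x_w\,NI(G)$, a principal multiple, whereas here the relation is a sum $DI(H)=DI(G)+(x_w)$ with a new variable. The results \cite[Lemma 3.6]{NQKR} and \cite[Remark 1.2]{ANR} are stated for the former situation, not the latter. The paper instead derives the formula $\mathrm{Ass}(DI(H)^s)=\{(\mathfrak p,x_w):\mathfrak p\in\mathrm{Ass}(DI(G)^s)\}$ from \cite[Lemma 3.4]{HM} to handle (ii), and uses \cite[Theorem 3.12]{AJZ} for (iii). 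Your high-level plan (``quote a transfer result for $I+x_wR$'') is right, but the specific references need to be the ones adapted to sums with a disjoint variable.
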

 
 \begin{proof}
Suppose that  the   cone $H=C(G)$  is obtained by adding the new vertex $w$ to $G$ and joining every vertex of $G$ to $w$.
 Using  \cite[Lemma 2.2]{SM} yields that    
 $$DI(H)=DI(G)+(x_w).$$
  It follows now from \cite[Lemma 3.4]{HM} that,  for all $s$, 
 \begin{equation}
 \mathrm{Ass}(DI(H)^s)=\{(\mathfrak{p}, x_w)~:~ \mathfrak{p}\in \mathrm{Ass}(DI(G)^s)\}. \label{555}
 \end{equation}
 
 (i) Let $DI(G)$ be normally torsion-free. Then the claim can be deduced from  \cite[Theorem 2.5]{SN}. Conversely, let 
 $DI(H)$ be normally torsion-free. By using \cite[Theorem 3.21]{SN}, we obtain that $DI(G)$ is  normally torsion-free.  \par 
 (ii) Based on (\ref{555}), and by considering the fact that 
   $$\mathrm{Min}(DI(H))=\{(\mathfrak{p}, x_w)~:~ \mathfrak{p}\in \mathrm{Min}(DI(G))\},$$
   one can easily show this claim. \par 
 (iii) One  concludes this assertion by  \cite[Theorem 3.12]{AJZ}. \par 
 (iv) By \cite[Definition 2.1]{N2}, $DI(H)$   is a unisplit
monomial ideal. Hence, 
\cite[Theorems  2.10 and  3.1]{N2} imply that   $DI(H)$ has the  strong persistence property.   \par 
 (v) Proposition \ref{SPPtoPP} together with (iv) yield that 
  $DI(H)$ has   the  persistence property.  \par 
(vi) This assertion follows promptly from  \cite[Theorem 5.1]{NKRT}. 
     \end{proof}
   

 Our next goal is to show that the dominating ideals of star graphs  are normally torsion-free. To do this, we first prove some results of general nature. We recall some definitions from \cite{FHV2} which  are necessary to establish Theorem~\ref{Coloring2}.  Let  $\mathcal{H}= (V(\mathcal{H}), E({\mathcal{H}}))$ be a hypergraph with $V(\mathcal{H})=\{x_1, \ldots, x_n\}$.

\begin{definition} (see \cite[Definition 2.7]{FHV2})
A {\em $d$-coloring} of $\mathcal{H}$ is any partition of $V(\mathcal{H}) = C_1\cup \cdots \cup C_d$
into $d$ disjoint sets such that for every $E \in E({\mathcal{H}})$, we have $E\nsubseteq C_i$  for all $i = 1, \ldots ,d$. (In the case of a
graph $G$, this simply means that any two vertices connected by an edge receive different colors.) The
$C_i$'s are called the color classes of $\mathcal{H}$. Each color class $C_i$ is an {\em independent set}, meaning that $C_i$ does not
contain any edge of the hypergraph. The chromatic number of $\mathcal{H}$, denoted by $\chi(\mathcal{H})$, is the minimal $d$
such that $\mathcal{H}$  has a $d$-coloring.
\end{definition}
\begin{definition} (see \cite[Definition 2.8]{FHV2})
 The hypergraph $\mathcal{H}$ is called {\em critically $d$-chromatic} if 
$\chi(\mathcal{H})= d$, but for every vertex
$x\in V(\mathcal{H})$, $\chi(\mathcal{H}\setminus \{x\})< d$,   where 
$\mathcal{H}\setminus \{x\}$ denotes the hypergraph $\mathcal{H}$ with $x$ and all edges containing $x$ removed.
\end{definition}

\begin{definition} (see \cite[Definition 4.2]{FHV2})
For each $s$, the {\em $s$-th expansion} of $\mathcal{H}$ is defined to be the hypergraph obtained by replacing each vertex $x_i \in V(\mathcal{H})$ by a collection $\{x_{ij}~|~ j=1, \ldots, s\}$, and replacing $E(\mathcal{H})$ by the edge  set that  consists of edges 
 $\{x_{i_1l_1}, \ldots, x_{i_rl_r}\}$ whenever 
 $\{x_{i_1}, \ldots, x_{i_r}\}\in E(\mathcal{H})$ and edges 
 $\{x_{il}, x_{ik}\}$ for $l\neq k$. We denote this hypergraph by $\mathcal{H}^s$. The new variables $x_{ij}$ are called the shadows of $x_i$. The process of setting $x_{il}$ to equal to $x_i$ for all $i$ and $l$ is called the {\em depolarization}. 
 \end{definition}
 
 The following result is a slight generalized form of \cite[Theorem 4.9]{NQKR}.  
   
 \begin{theorem} \label{Coloring2}
Assume that  $\mathcal{G}=(V(\mathcal{G}), E(\mathcal{G}))$ and  $\mathcal{H}=(V(\mathcal{H}), E(\mathcal{H}))$ are 
 two finite simple hypergraphs such that $V(\mathcal{H})=V(\mathcal{G})\cup \{w_1, \ldots, w_t\}$ with $w_i\notin V(\mathcal{G})$ for each $i=1, \ldots, t$,  and $E(\mathcal{H})=E(\mathcal{G}) \cup \{\{v,w_1, \ldots, w_t\}\}$ for some vertex $v\in V(\mathcal{G})$. Then 
$$\mathrm{Ass}_{R'}(R'/J(\mathcal{H})^s)=\mathrm{Ass}_{R}(R/J(\mathcal{G})^s)\cup
\{(x_v, x_{w_1}, \ldots, x_{w_t})\},$$
 for all  $s$, where $R=K[ x_\alpha : \alpha\in V(\mathcal{G})]$ and 
$R'=K[ x_\alpha : \alpha\in V(\mathcal{H})]$.
\end{theorem}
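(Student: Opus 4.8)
The plan is to reduce the statement about the cover ideal $J(\mathcal{H})$ to the corresponding statement about $J(\mathcal{G})$ by exploiting the combinatorial description of the minimal vertex covers of $\mathcal{H}$ in terms of those of $\mathcal{G}$, together with the Alexander-dual/hypergraph machinery and the expansion/coloring theory recalled just before the statement. The key structural observation is that a minimal vertex cover $W$ of $\mathcal{H}$ is obtained from a minimal vertex cover of $\mathcal{G}$ by either (a) adjoining one of the $w_i$'s (when $v \notin W$, one needs to hit the new edge $\{v, w_1, \dots, w_t\}$ and the cheapest way, by minimality, is to include a single $w_i$), or (b) keeping a minimal vertex cover that already contains $v$ (which then automatically covers the new edge) — and conversely every such set restricts to a minimal vertex cover of $\mathcal{G}$. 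This dictionary translates into a precise relation between $\mathcal{G}(J(\mathcal{H}))$ and $\mathcal{G}(J(\mathcal{G}))$, and in particular gives $\mathrm{Min}(J(\mathcal{H})) = \mathrm{Min}(J(\mathcal{G})) \cup \{(x_v, x_{w_1}, \dots, x_{w_t})\}$, which is the $s=1$ case.

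For general $s$, I would pass to the polarization/expansion picture: by \cite[Theorem 4.9]{NQKR} (of which this is asserted to be a slight generalization), the associated primes of powers of a cover ideal are controlled by the critically chromatic subhypergraphs of the expansions $\mathcal{H}^s$. The core of the argument is then to show that $\chi$-critical subhypergraphs of $\mathcal{H}^s$ either already live inside $\mathcal{G}^s$ (contributing exactly the primes in $\mathrm{Ass}_R(R/J(\mathcal{G})^s)$) or else are forced, by the very special shape of the single new edge $\{v, w_1, \dots, w_t\}$ and its shadows, to be the subhypergraph on the shadows of $\{v, w_1, \dots, w_t\}$, whose depolarization yields precisely the prime $(x_v, x_{w_1}, \dots, x_{w_t})$. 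Here one uses that the shadows $\{w_{ij}\}$ together with $\{v_k\}$ form, after adding the ``clique'' edges $\{x_{il}, x_{ik}\}$ coming from the expansion, a configuration whose chromatic behaviour is rigid; adding or removing any shadow vertex changes the chromatic number in the controlled way required by critical $d$-chromaticity.

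The main obstacle I expect is the ``only if'' direction in the expansion analysis: verifying that no new, unexpected $\chi$-critical subhypergraph of $\mathcal{H}^s$ can arise by mixing shadows of $v$, shadows of the $w_i$'s, and vertices of $\mathcal{G}^s$ in a way that is not captured by either of the two sources above. One has to argue carefully that because the $w_i$ are leaves attached only through the single edge containing $v$, in any expansion the only interaction between the ``$w$-side'' and the ``$\mathcal{G}$-side'' is through shadows of $v$, so a critical subhypergraph that uses a $w$-shadow can be ``pruned'' down to the star on shadows of $\{v, w_1,\dots,w_t\}$ without decreasing its chromatic number below the critical threshold. Once this rigidity is established, the inclusion $\mathrm{Ass}_{R'}(R'/J(\mathcal{H})^s) \subseteq \mathrm{Ass}_{R}(R/J(\mathcal{G})^s)\cup \{(x_v, x_{w_1}, \dots, x_{w_t})\}$ follows, and the reverse inclusion is straightforward: primes from $\mathcal{G}$ persist because $\mathcal{G}^s$ is an induced subhypergraph of $\mathcal{H}^s$, and $(x_v, x_{w_1},\dots,x_{w_t})$ is already a minimal prime (hence associated for every power). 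A final remark: alternatively one could try a more hands-on route via the localization formula $J(\mathcal{H})R'_{x_{w_i}}$ and an analogue of \cite[Lemma 3.4]{HM} used in Lemma~\ref{cone-DI}, but since $w_1,\dots,w_t$ are not universal vertices of $\mathcal{H}$ this does not directly apply, so the expansion argument seems the cleaner path.
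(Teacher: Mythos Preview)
Your proposal is correct and follows essentially the same route as the paper. Both arguments rest on the Francisco--H\`a--Van Tuyl expansion/coloring machinery: for the easy inclusion the paper invokes \cite[Lemma 2.11]{FHV2} (induced subhypergraphs detect associated primes) where you speak of $\mathcal{G}^s$ sitting inside $\mathcal{H}^s$, and for the hard inclusion both you and the paper use \cite[Corollary 4.5]{FHV2} to pass to critical $(s+1)$-chromatic subhypergraphs of $\mathcal{H}^s$, then argue that any such subhypergraph meeting a shadow of some $w_k$ must, by criticality and the fact that the $w_k$'s touch only shadows of $v$ and of the other $w_i$'s, already contain a clique of size $s+1$ on those shadows---forcing $\mathfrak{p}=(x_v,x_{w_1},\ldots,x_{w_t})$. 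Your identification of the ``pruning'' step as the crux matches exactly where the paper spends its effort.
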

\begin{proof}
For convenience of notation, set $I:=J(\mathcal{G})$ and $J:=J(\mathcal{H})$. We first prove  that 
$\mathrm{Ass}_{R}(R/I^s)\cup
\{(x_v, x_{w_1}, \ldots, x_{w_t})\}\subseteq \mathrm{Ass}_{R'}(R'/J^s)$ for all  $s$. Fix $s\geq 1$, and assume that  $\mathfrak{p}=(x_{i_1}, \ldots, x_{i_r})$ is an  arbitrary element of  $\mathrm{Ass}_R(R/I^s)$. According to 
\cite[Lemma 2.11]{FHV2}, we get  
$\mathfrak{p}\in \mathrm{Ass}(K[\mathfrak{p}]/J(\mathcal{G}_\mathfrak{p})^s)$, where $K[\mathfrak{p}]=K[x_{i_1}, \ldots, x_{i_r}]$ and  $\mathcal{G}_\mathfrak{p}$ is the induced subhypergraph  of $\mathcal{G}$ on the vertex set $\{i_1, \ldots, i_r\}\subseteq V(\mathcal{G})$. Since   $\mathcal{G}_\mathfrak{p}= \mathcal{H}_\mathfrak{p}$, we have  
$\mathfrak{p}\in \mathrm{Ass}(K[\mathfrak{p}]/J(H_\mathfrak{p})^s)$. This yields  that  $\mathfrak{p}\in \mathrm{Ass}_{R'}(R'/J^s)$. On account  of  $(x_v, x_{w_1}, \ldots, x_{w_t})\in \mathrm{Ass}_{R'}(R'/J^s)$, one derives $$\mathrm{Ass}_{R}(R/I^s)\cup \{(x_v, x_{w_1}, \ldots, x_{w_t})\}\subseteq \mathrm{Ass}_{R'}(R'/J^s).$$  
To complete the proof, it is enough for us  to show  the reverse inclusion. Assume that $\mathfrak{p}=(x_{i_1}, \ldots, x_{i_r})$ is
 an  arbitrary element of  $\mathrm{Ass}_{R'}(R'/J^s)$ with 
 $\{i_1, \ldots, i_r\}\subseteq V(\mathcal{H})$. If $\{i_1, \ldots, i_r\}\subseteq V(\mathcal{G})$, then \cite[Lemma 2.11]{FHV2} implies   that $\mathfrak{p}\in \mathrm{Ass}_R(R/I^s)$, and the proof is done. Thus, let 
 $\{{w_1}, \ldots, {w_t}\} \cap  \{i_1, \ldots, i_r\}\neq \emptyset$. 
 It follows from  \cite[Corollary 4.5]{FHV2} that  the associated  primes of $J(\mathcal{H})^s$ will correspond to critical chromatic subhypergraphs of size $s+1$ in the $s$-th expansion of $\mathcal{H}$. This means that one can take the induced subhypergraph on the vertex set $\{i_1, \ldots, i_r\}$, and 
 then form the $s$-th expansion on this induced subhypergraph, and within this new hypergraph find a critical $(s+1)$-chromatic hypergraph.   Notice that since this expansion cannot have any critical chromatic subgraphs, this implies that $\mathcal{H}_{\mathfrak{p}}$ must be connected.   
  Without loss of generality, one may  assume  that $i_1=v$ and $i_2=w_1, i_3=w_2, \ldots, i_{t+1}=w_t$. 
 Thanks to  $w_1, \ldots, w_t$ are   connected to $v$ in the hypergraph $\mathcal{H}$, and because  this induced subhypergraph is critical, if we remove any  vertex $w_k$ for some $1\leq k \leq t$, one  can color the resulting hypergraph with at least $s$ colors. This leads to  that $w_k$ has to be adjacent to at least $s$ vertices. But the only things $w_k$ is adjacent to are the shadows of $w_i$ for each $i=1, \ldots, t$, and the shadows of $v$, and so one has a clique among these vertices.  Accordingly, 
 $w_k$ and its neighbors will form a clique of size $s+1$. Since 
   a clique is a critical graph, it follows that we do not   need any element of $\{i_{t+2}, \ldots, i_r\}$ or their shadows when making the critical $(s+1)$-chromatic hypergraph. Hence, we obtain  $\mathfrak{p}=(x_v, x_{w_1}, \ldots, x_{w_t})$. This finishes the proof.    
\end{proof}


 \begin{lemma} \label{NTF1} 
Let $I$ be a normally torsion-free square-free  monomial ideal in a polynomial ring $R=K[x_{1},\ldots ,x_{n}]$ with $\mathcal{G}(I) \subset R$. Then the ideal $$L:=IS\cap (x_{n}, x_{n+1}, x_{n+2}, \ldots, x_m)\subset  S=R[x_{n+1}, x_{n+2}, \ldots, x_m],$$  is normally torsion-free.
\end{lemma}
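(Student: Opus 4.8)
The plan is to phrase everything in terms of the monomial prime $\mathfrak{q}:=(x_n,x_{n+1},\dots,x_m)$ of $S$, so that $L=IS\cap\mathfrak{q}$, and then to split the proof into two ingredients: a ``distributivity for powers'' identity $L^k=I^kS\cap\mathfrak{q}^k$, and the normal torsion-freeness of $I$ combined with the standard behaviour of associated primes under the extension $R\subseteq S$ and for powers of a prime generated by variables. First I would note that $L$ is a square-free monomial ideal (an intersection of radical monomial ideals), and that $\mathrm{Min}(L)$ is the set of minimal members of $\{\mathfrak{p}S:\mathfrak{p}\in\mathrm{Min}(I)\}\cup\{\mathfrak{q}\}$. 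If $I\subseteq(x_n)$, then $IS\subseteq\mathfrak{q}$ and $L=IS$, and the conclusion is immediate since $I$ normally torsion-free implies $IS$ normally torsion-free; so I may assume $I\not\subseteq(x_n)$. In that case (using $m>n$) one checks directly that $\mathfrak{q}$ is incomparable with every $\mathfrak{p}S$ and that the $\mathfrak{p}S$ form an antichain, hence $\mathrm{Min}(L)=\{\mathfrak{p}S:\mathfrak{p}\in\mathrm{Min}(I)\}\cup\{\mathfrak{q}\}$.

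The crux is the identity
\[
L^k=(IS\cap\mathfrak{q})^k=I^kS\cap\mathfrak{q}^k\qquad(k\ge 1).
\]
Only the inclusion $\supseteq$ needs an argument, and it is a monomial computation. Given a monomial $w\in I^kS\cap\mathfrak{q}^k$, pick $u_1,\dots,u_k\in\mathcal{G}(I)$ with $\alpha:=u_1\cdots u_k\mid w$; also $\sum_{j=n}^m\deg_{x_j}(w)\ge k$ since $w\in\mathfrak{q}^k$. The point is that among $x_n,\dots,x_m$ only $x_n$ can divide any $u_l$; writing $P=\{l: x_n\mid u_l\}$ and $Q=[k]\setminus P$, each $u_l$ with $l\in Q$ is coprime to all of $x_n,\dots,x_m$, while $\deg_{x_n}(\alpha)=|P|$ because $I$ is square-free. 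Since
\[
\Big(\deg_{x_n}(w)-|P|\Big)+\sum_{j=n+1}^m\deg_{x_j}(w)=\sum_{j=n}^m\deg_{x_j}(w)-|P|\ \ge\ k-|P|=|Q|,
\]
one can assign to the indices in $Q$ variables among $x_n,\dots,x_m$ using only the ``room'' left in $w$ over $\alpha$ (a distribution of $|Q|$ items into bins of total capacity at least $|Q|$). Setting $g_l:=u_l$ for $l\in P$ and $g_l:=u_l\,x_{j(l)}$ for $l\in Q$, one has $g_l\in IS\cap\mathfrak{q}$ and $\prod_l g_l=\alpha\cdot\prod_{l\in Q}x_{j(l)}$, which by the choice of the assignment divides $w$; hence $w\in L^k$. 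The square-freeness of $IS$ and the fact that all but one of the generators of $\mathfrak{q}$ avoid $\mathrm{supp}(I)$ are both essential: the identity fails for non-square-free ideals, and also when two of the variables generating $\mathfrak{q}$ lie in $\mathrm{supp}(I)$.

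Granting this identity, the injection $0\to S/L^k\to S/I^kS\oplus S/\mathfrak{q}^k$ gives
\[
\mathrm{Ass}(S/L^k)\subseteq\mathrm{Ass}(S/I^kS)\cup\mathrm{Ass}(S/\mathfrak{q}^k).
\]
Here $\mathrm{Ass}(S/\mathfrak{q}^k)=\{\mathfrak{q}\}$ because $\mathfrak{q}$ is generated by a regular sequence of variables, and $\mathrm{Ass}(S/I^kS)=\{\mathfrak{p}S:\mathfrak{p}\in\mathrm{Ass}(R/I^k)\}=\{\mathfrak{p}S:\mathfrak{p}\in\mathrm{Min}(I)\}$, the first equality because associated primes are preserved under the polynomial extension $R\subseteq S$ and the second because $I$ is normally torsion-free. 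Therefore $\mathrm{Ass}(S/L^k)\subseteq\{\mathfrak{p}S:\mathfrak{p}\in\mathrm{Min}(I)\}\cup\{\mathfrak{q}\}=\mathrm{Min}(L)=\mathrm{Ass}(S/L)$ for every $k\ge 1$, which is exactly the statement that $L$ is normally torsion-free.

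The main obstacle is clearly the power identity $L^k=I^kS\cap\mathfrak{q}^k$, since intersection of ideals does not commute with taking powers in general; this step genuinely exploits the special shape of $IS$ (square-free) and of $\mathfrak{q}$ (generated by variables, at most one of which meets $\mathrm{supp}(I)$), and everything afterwards is routine bookkeeping with associated primes. If one wishes to include the degenerate case $m=n$, it is cleanest to run the final step through symbolic powers instead, proving $L^{(k)}=I^kS\cap\mathfrak{q}^k=L^k$ and invoking \cite[Theorem 1.4.6]{HH1}; there one uses the minimality of the vertex covers of the clutter of $I$ to see that the ``extra'' primary components that are no longer minimal for $L$ are absorbed by $\mathfrak{q}^k$.
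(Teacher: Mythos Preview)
Your argument is correct and takes a genuinely different route from the paper's.

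The paper views $I$ as the cover ideal $J(\mathcal{H})$ of a hypergraph $\mathcal{H}$ and appeals to Theorem~\ref{Coloring2} (a coloring result in the spirit of Francisco--H\`a--Van~Tuyl) to obtain the exact equality $\mathrm{Ass}_S(S/L^k)=\mathrm{Ass}_R(R/I^k)\cup\{\mathfrak{q}\}$ for all $k$, from which normal torsion-freeness of $L$ follows immediately once $I$ is normally torsion-free. In other words, the paper outsources the hard work to the combinatorial description of associated primes of powers of cover ideals via critical chromatic subhypergraphs of expansions.

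Your approach is more elementary and entirely self-contained: you prove the power formula $L^k=I^kS\cap\mathfrak{q}^k$ by a direct monomial argument that exploits (a) the square-freeness of the generators of $I$ and (b) the fact that $\mathfrak{q}$ meets $\mathrm{supp}(I)$ in at most the single variable $x_n$, and then you read off the containment of associated primes from the embedding $S/L^k\hookrightarrow S/I^kS\oplus S/\mathfrak{q}^k$ together with the standard behaviour of $\mathrm{Ass}$ under polynomial extension and the fact that $\mathfrak{q}^k$ is $\mathfrak{q}$-primary. This avoids the hypergraph machinery altogether. The trade-off is that the paper's method yields the stronger statement that $\mathrm{Ass}(S/L^k)$ equals $\mathrm{Ass}(R/I^k)\cup\{\mathfrak{q}\}$ (not merely a containment), which is of independent interest; your method gives only the inclusion needed for the lemma, but with essentially no external input beyond basic monomial and primary-decomposition facts.
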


\begin{proof}
It  is well-known that  one can  view the  square-free monomial ideal $I$ as the cover ideal of a 
 simple hypergraph $\mathcal{H}$ such that  the hypergraph $\mathcal{H}$ corresponds  to $I^\vee$, where $I^\vee$ denotes the  Alexander dual of $I$. Then  we have $I=J(\mathcal{H})$, where 
 $J(\mathcal{H})$ denotes the cover ideal of the hypergraph $\mathcal{H}$.  Fix $k\geq 1$. 
 On account of  Theorem \ref{Coloring2}, we get the following equality 
  $$\mathrm{Ass}_{S}(S/L^k)=\mathrm{Ass}_{R}(R/J(\mathcal{H})^k)\cup \{(x_{n}, x_{n+1}, x_{n+2}, \ldots, x_m)\}.$$
Because  $I$ is normally torsion-free, one derives  that 
$\mathrm{Ass}_{R}(R/J(\mathcal{H})^k)=\mathrm{Min}(J(\mathcal{H}))$, and hence $\mathrm{Ass}_{S}(S/L^k)=\mathrm{Min}(J(\mathcal{H}))\cup
\{(x_{n}, x_{n+1}, x_{n+2}, \ldots, x_m)\}.$ This gives rise to  $\mathrm{Ass}_{S}(S/L^k)=\mathrm{Min}(L)$. Therefore,  
$L$ is normally torsion-free, as claimed.
\end{proof}
 

\begin{lemma}\label{Whisker2}
Let $G=(V(G), E(G))$ and  $H=(V(H), E(H))$ be two finite simple graphs such that $V(H)=V(G)\cup \{w\}$ with $w\notin V(G)$,  and $E(H)=E(G) \cup \{\{v,w\}\}$ for some vertex $v\in V(G)$ and  $\prod_{j\in N_{G}[v]}x_j \notin \mathcal{G}(NI(G))$. If $DI(G)$ is normally torsion-free, then $DI(H)$ is normally torsion-free.
\end{lemma}

\begin{proof}
Let $DI(G)$ be  normally torsion-free. It follows from  \cite[Lemma 2.2]{SM}    that  
$DI(H)= DI(G)\cap (x_v, x_w)R$, where $R=K[x_{\alpha}~:~ \alpha \in V(H)]$. Now, we can conclude the assertion from   Lemma   \ref{NTF1}.
\end{proof}

 We are in a position to give   the second   main result  of this paper in the following corollary, which is related to dominating  ideals of star graphs.

\begin{corollary}\label{star graph2}
The dominating  ideals of star graphs   are normally torsion-free.
\end{corollary}
\begin{proof}
We use the  induction on the number of vertices  together with   Lemma \ref{Whisker2}. 
\end{proof}

 
 \begin{question}
 (i) Can we conclude that the dominating ideals of  trees are normally torsion-free?
 
 (ii)  Let $G_0$ be a graph and let $H:=G_0 \cup  W(Y )$  be its
whisker graph. Then if $DI(G_0)$ is normally torsion-free, then can we deduce that $DI(H)$ is normally torsion-free?
 \end{question}
 


 \section{On the closed neighborhood ideals and dominating ideals of cycles}\label{sec-cycles}

As stated in the introduction, the edge ideals and the cover ideals of bipartite graphs are known to be normally torsion-free,  see \cite{GRV, SVV}. In particular, the edge ideals and the cover ideals of even cycles are normally torsion-free. However, this behaviour changes when we consider the odd cycles. The cover ideals of odd cycles happen to be nearly normally torsion-free,  see \cite{NKA}, but edge ideals of odd cycles do not admit such tamed behaviour for the set of their associated primes. Given these facts, it is natural to expect some irregularities for the closed neighborhood ideals and dominating ideals of even and odd cycles. It can be verified by using Macaulay2~\cite{GS} that in general, the closed neighborhood ideals of cycles, regardless of the parity of their lengths,  are neither normally torsion-free nor nearly normally torsion-free. However, in this section, we will show that the closed neighborhood ideals of cycles admit strong persistence property. On the other side, as another main result of this section, we will show that the dominating ideals of cycles are nearly normally torsion-free. 

To establish above-mentioned results, we begin by proving the following theorem which gives an inductive way to study the normality of an ideal.   

\begin{theorem} \label{I+xH}
Let $I$ and $H$  be two  normal square-free monomial ideals in a  polynomial ring $R=K[x_1, \ldots, x_n]$ 
such that $I+H$ is normal.
  Let  $x_{c} \in \{x_1, \ldots, x_n\}$ be  a variable with  
 $\gcd (v,x_c)=1$ for all $v\in \mathcal{G}(I)\cup  \mathcal{G}(H)$. Then $L:=I+x_cH$ is normal. 
\end{theorem}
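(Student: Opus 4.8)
The plan is to prove normality of $L$ straight from the definition: by \cite[Theorem 1.4.2]{HH1}, a monomial $w$ is integral over $L^{k}$ exactly when $w^{m}\in L^{km}$ for some $m\ge 1$, so it suffices to fix $k\ge 1$ and a monomial $w\in\overline{L^{k}}$ and deduce $w\in L^{k}$. First I would record the shape of the powers: because every generator of $I$ and of $H$ is coprime to $x_{c}$, one has $L^{N}=\sum_{i=0}^{N}x_{c}^{i}\,I^{N-i}H^{i}$ for all $N\ge 1$, and each element of $\mathcal{G}(I^{N-i}H^{i})$ is coprime to $x_{c}$. Now write $w=x_{c}^{a}v$ with $x_{c}\nmid v$. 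From $w^{m}=x_{c}^{am}v^{m}\in L^{km}$, comparing the exponent of $x_{c}$ in a generator of $L^{km}$ that divides $w^{m}$, I obtain an index $j$ with $0\le j\le\min(am,km)$ and $v^{m}\in I^{km-j}H^{j}$.

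The next step is to take ``$m$-th roots'' of this membership, and this is where the hypotheses enter. If $j=0$ then $v^{m}\in I^{km}$, so $v\in\overline{I^{k}}=I^{k}\subseteq L^{k}$ since $I$ is normal, and we are done. If $j\ge 1$, put $i:=\lceil j/m\rceil$; then $1\le i\le\min(a,k)$ (using $1\le j\le am$ and $j\le km$). Writing $km-j=(k-i)m+d_{1}$ and $j=(i-1)m+d_{2}$ with $d_{1},d_{2}\ge 0$ and $d_{1}+d_{2}=m$, and using $I^{d_{1}}H^{d_{2}}\subseteq(I+H)^{m}$, we obtain
\[
v^{m}\in I^{(k-i)m}H^{(i-1)m}(I+H)^{m}=\bigl(I^{k-i}H^{i-1}(I+H)\bigr)^{m},
\]
hence $v\in\overline{I^{k-i}H^{i-1}(I+H)}$.

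The crux is therefore the following statement, which is exactly where all three normality hypotheses on $I$, $H$ and $I+H$ are used:
\[
(\star)\qquad I^{p}H^{q}(I+H)=I^{p+1}H^{q}+I^{p}H^{q+1}\ \text{ is integrally closed for all }p,q\ge 0.\]
Granting $(\star)$, we finish as follows: $v\in I^{k-i}H^{i-1}(I+H)=I^{k-i+1}H^{i-1}+I^{k-i}H^{i}$, so $v$ is a multiple of some $g\in\mathcal{G}(I^{k-i+1}H^{i-1})$ or of some $g\in\mathcal{G}(I^{k-i}H^{i})$; since $g$ is coprime to $x_{c}$ and $i-1\le a$, respectively $i\le a$, the monomial $x_{c}^{i-1}g$, respectively $x_{c}^{i}g$, divides $w$ and lies in $x_{c}^{i-1}I^{k-i+1}H^{i-1}\subseteq L^{k}$, respectively $x_{c}^{i}I^{k-i}H^{i}\subseteq L^{k}$. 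In all cases $w\in L^{k}$, proving $\overline{L^{k}}=L^{k}$ for every $k$, i.e. $L$ is normal.

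The step I expect to be the main obstacle is $(\star)$. It cannot follow from ``$I$ and $H$ normal'' alone: the normality of $I+H$ is genuinely needed to control the ``mixed'' contributions. One way to get $(\star)$ is to invoke a normality criterion for multi-Rees algebras of monomial ideals applied to $R[It,Hs,(I+H)u]$, which, given the normality of $I$, $H$ and $I+H$, yields that every product $I^{a}H^{b}(I+H)^{c}$ is integrally closed. A more hands-on route is through Newton polyhedra: for a monomial ideal $M$, $\overline{M}$ is generated by the monomials whose exponent vectors lie in $\mathrm{NP}(M)\cap\mathbb{Z}^{n}$, and since $\mathrm{NP}(I+H)=\mathrm{conv}\bigl(\mathrm{NP}(I)\cup\mathrm{NP}(H)\bigr)$ one checks that every lattice point of $\mathrm{NP}\bigl(I^{p}H^{q}(I+H)\bigr)$ already lies in $\mathcal{G}\bigl(I^{p}H^{q}(I+H)\bigr)+\mathbb{Z}^{n}_{\ge 0}$, using the normality of $I+H$ precisely for the lattice points that are not ``separated'' by the two polyhedra $\mathrm{NP}(I^{p+1}H^{q})$ and $\mathrm{NP}(I^{p}H^{q+1})$. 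Once $(\star)$ is secured, the reduction above closes the proof.
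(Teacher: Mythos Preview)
Your reduction up to $(\star)$ is correct and you rightly flag $(\star)$ as the crux, but neither of your suggested routes to it actually works from the stated hypotheses. The multi-Rees approach is circular: normality of $R[It,Hs,(I+H)u]$ is \emph{equivalent} to every product $I^{a}H^{b}(I+H)^{c}$ being integrally closed, so you would need a theorem saying that normality of $I$, $H$, $I+H$ individually forces normality of their multi-Rees algebra, and no such result exists. Concretely, take $I=(ab,cd)$ and $H=(ac,bd)$ in $K[a,b,c,d]$: each of $I$, $H$, and $I+H$ (the edge ideal of a $4$-cycle, hence normally torsion-free) is normal, yet $(abcd)^{2}=(a^{2}bc)(bcd^{2})\in(IH)^{2}$ while $abcd\notin IH$, so already $IH$ is not integrally closed and the multi-Rees algebra is not normal. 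The Newton-polyhedron sketch runs into the same wall: after the (correct) identity $\mathrm{NP}(I+H)=\mathrm{conv}\bigl(\mathrm{NP}(I)\cup\mathrm{NP}(H)\bigr)$, the remaining assertion about lattice points of $\mathrm{NP}\bigl(I^{p}H^{q}(I+H)\bigr)$ is precisely $(\star)$ restated polyhedrally, and you have not indicated how the normality of $I+H$ supplies it. As written, then, the argument has a genuine gap at the decisive step.

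The paper bypasses $(\star)$ entirely with a much simpler bookkeeping. Writing $\alpha=x_{c}^{b}\delta$ with $x_{c}\nmid\delta$ and $\alpha^{k}\in L^{tk}$, one records a single witness $\alpha^{k}=\prod u_{i}^{p_{i}}\,x_{c}^{\,q+\varepsilon}\prod h_{j}^{q_{j}}\,\beta$ with $p+q=tk$ and $x_{c}\nmid\beta$; comparing $x_{c}$-degrees gives $bk=q+\varepsilon$, and stripping off $x_{c}$ yields $\delta^{k}\in(I+H)^{tk}$. Normality of $I+H$ alone then gives $\delta\in(I+H)^{t}$, say $\delta=\prod u_{i}^{l_{i}}\prod h_{j}^{z_{j}}\gamma$ with $l+z=t$, and the paper reads off $b\ge z$ from the $x_{c}$-count to conclude $x_{c}^{b}\delta\in L^{t}$. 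No mixed product $I^{p}H^{q}$ is ever required to be integrally closed; in fact the hypotheses that $I$ and $H$ be individually normal are not used in the paper's argument. The lesson for your approach: rather than splitting $v^{m}$ as an element of $I^{(k-i)m}H^{(i-1)m}(I+H)^{m}$ and attempting to take an $m$-th root of each factor, push everything into $(I+H)^{km}$, where normality \emph{is} available, descend to $(I+H)^{k}$, and handle the $x_{c}$-exponent separately.
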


\begin{proof}
Let $\mathcal{G}(I)= \{u_{1},\ldots ,u_{s}\}$ and 
$\mathcal{G}(H)=\{h_1, \ldots, h_r\}$. 
Since  $\gcd (v,x_c)=1$ for all $v\in \mathcal{G}(I)\cup  \mathcal{G}(H)$, without loss of generality, one may
assume that $x_c=x_{1}\in K[x_{1}]$ and 
$$\mathcal{G}(I) \cup \mathcal{G}(H)=\{u_{1},\ldots ,u_{s}, h_1, \ldots, h_r\}\subseteq K[x_{2},\ldots ,x_{n}].$$   We must show  that $\overline{L^{t}}=L^{t}$ for all
integers $t\geq 1$. For this purpose, it is enough  to prove  that $%
\overline{L^{t}}\subseteq L^{t}$. Let $\alpha$ be a monomial in $\overline{L^{t}}$ and
write $\alpha =x_1^{b}\delta $ with $x_1\nmid \delta $ and $\delta \in R$. On account of  
\cite[Theorem 1.4.2]{HH1}, $\alpha ^{k}\in L^{tk}$ for some integer $k\geq 1$%
. Write%
\begin{equation}
\alpha^{k}=x_1^{bk}\delta^{k}=\prod\limits_{i=1}^{s}u_{i}^{p_{i}}
x_1^{q+\varepsilon}
\prod\limits_{j=1}^{r}h_{j}^{q_{j}}\beta \text{,} \label{11}
\end{equation}%
with $\sum_{i=1}^{s}p_{i}=p$, $\sum_{j=1}^{r}q_j=q$,
$p+q=tk$, $\varepsilon \geq 0$, and $\beta$ is some
monomial in $R$ such that $x_1\nmid \beta$. 
Because $x_1\nmid \beta$, $x_1\nmid \delta$,  and 
$\gcd (v,x_1)=1$ for all $v\in \mathcal{G}(I)\cup 
 \mathcal{G}(H)$, one can conclude   that 
$bk=q+\varepsilon $. Accordingly, by  virtue of (\ref{11}),  we
obtain 
$$\delta^{k}=\prod\limits_{i=1}^{s}u_{i}^{p_{i}}
\prod\limits_{j=1}^{r}h_{j}^{q_{j}}\beta 
 \in (I+H)^{tk}.$$ This leads to  $\delta \in \overline{(I+H)^{t}}.$ Thanks to  $I+H$ is
normal, we deduce that  $\overline{(I+H)^{t}}=(I+H)^{t}$, and so $\delta \in (I+H)^{t}$. Therefore,  one can 
write%
\begin{equation}
\delta =\prod\limits_{i=1}^{s}u_{i}^{l_{i}}
\prod\limits_{j=1}^{r}h_{j}^{z_{j}}
\gamma \text{,} \label{22}
\end{equation}%
with $\sum_{i=1}^{s}l_{i}=l$, $\sum_{j=1}^{r}z_{j}=z$, $l+z=t$,  and $\gamma $ is some monomial in 
$R$. Note that $x_1\nmid \gamma $ as $x_1\nmid \delta $. Due to  $x_1^{bk}\delta^{k}\in L^{tk}$, it follows immediately  from  (\ref{22}) that 
$$\prod\limits_{i=1}^{s}u_{i}^{l_{i}k}x_1^{bk}
\prod\limits_{j=1}^{r}h_{j}^{z_{j}k}
\gamma ^{k}\in L^{tk}=\left(
I+x_1H\right)^{tk}.$$ Consequently, we conclude that $bk\geq zk$, that is, $%
b\geq z$. This gives rise to $$x_1^{b}\delta
=\prod\limits_{i=1}^{s}u_{i}^{l_{i}}x_1^{b}
\prod\limits_{j=1}^{r}h_{j}^{z_{j}}\gamma \in \left(
I+x_1H\right)^{t},$$ and the proof is over.
\end{proof}


We state the third main result of this paper in the next theorem, which is related to the closed neighborhood ideals of cycles.  
 
\begin{theorem} \label{cycle1}
Let $C_n$ be a cycle graph of order $n$. 
 Then the following statements hold:
\begin{itemize}
\item[(i)]   $NI(C_n)$ is normal.
\item[(ii)]  $NI(C_n)$ has the strong persistence property. 
\item[(iii)]  $NI(C_n)$ has the persistence property. 
\end{itemize}
\end{theorem}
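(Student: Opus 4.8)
The plan is to prove (i), and then deduce (ii) and (iii) for free: once $NI(C_n)$ is shown to be normal, the strong persistence property follows from \cite[Theorem 6.2]{RNA}, and the persistence property then follows from Proposition~\ref{SPPtoPP}. So the entire content is in establishing the normality of $NI(C_n)$, and for this I would set up an induction on $n$ using Theorem~\ref{I+xH}. Label the vertices of $C_n$ cyclically by $1, 2, \ldots, n$, so that $NI(C_n) = (x_nx_1x_2,\ x_1x_2x_3,\ x_2x_3x_4,\ \ldots,\ x_{n-1}x_nx_1)$, the generator attached to vertex $i$ being $x_{i-1}x_ix_{i+1}$ with indices read modulo $n$. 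The base cases ($n=3,4,5$, and possibly $6$) would be checked directly, e.g. with Macaulay2 or by hand, since the recursion will need a couple of small cycles and short paths to get started.

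For the inductive step I would try to write $NI(C_n) = I + x_c H$ where $I$ and $x_cH$ are the natural pieces obtained by ``cutting'' the cycle at a vertex. Concretely, pick a vertex, say $n$, and separate the three generators that involve $x_n$, namely $x_{n-1}x_nx_1$, $x_{n-2}x_{n-1}x_n$, and $x_nx_1x_2$, from the remaining generators $x_1x_2x_3, x_2x_3x_4, \ldots, x_{n-3}x_{n-2}x_{n-1}$, which form (essentially) the $3$-path ideal of the path on vertices $1,\ldots,n-1$. I would take $x_c = x_n$, so that $H = (x_{n-1}x_1,\ x_{n-2}x_{n-1},\ x_1x_2)$ and $I$ is the $3$-path ideal of a path; here $\gcd(v, x_n) = 1$ for every $v \in \mathcal{G}(I) \cup \mathcal{G}(H)$, as required by Theorem~\ref{I+xH}. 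To apply that theorem I then need three normality facts: that $I$ is normal, that $H$ is normal, and that $I + H$ is normal. The normality of $I$, the $3$-path ideal of a path, follows from Corollary~\ref{Path.NTF} together with \cite[Theorem 1.4.6]{HH1} (normally torsion-free implies normal). The ideal $H$ and the sum $I+H$ are square-free monomial ideals on the vertices $1,\ldots,n-1$, and I would identify $I+H$ as (or show it equals) a closed neighborhood ideal, $3$-path ideal, or an edge-ideal-type object that has already been handled, so that its normality again reduces to a normally-torsion-free statement; failing a clean identification, I would run a separate induction on these ``path-with-endpoint-modifications'' ideals in parallel with the cycle induction.

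The main obstacle I anticipate is precisely the verification that $I + H$ (the combined ideal after the cut, before reattaching the multiplied-by-$x_n$ part) is normal: this is not literally a $NI$ of a smaller cycle, so it falls outside the induction hypothesis as naively stated, and one must either recognize it as a known normal ideal or strengthen the inductive statement to cover the relevant family of ``broken cycle'' ideals on a path. A secondary technical point is bookkeeping with the cyclic indices and making sure the $\gcd(v,x_c)=1$ hypothesis of Theorem~\ref{I+xH} genuinely holds for the chosen splitting — the vertex $x_c$ must not appear in any generator of either $I$ or $H$, which is why it is essential to peel off \emph{all} generators containing $x_n$ into the $x_cH$ part. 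Once the auxiliary normality of the broken-cycle/path ideals is in place, the inductive step is a direct application of Theorem~\ref{I+xH}, and parts (ii) and (iii) are immediate corollaries as indicated above.
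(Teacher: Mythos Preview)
Your approach is essentially the paper's: the same splitting $NI(C_n)=I+x_cH$ with $I$ the $3$-path ideal of the path (normal by Corollary~\ref{Path.NTF}) and $H$ the edge ideal of a $4$-vertex path (normal since bipartite edge ideals are normally torsion-free), followed by an appeal to Theorem~\ref{I+xH}, with (ii) and (iii) then deduced exactly as you say. The only substantive difference is that the paper does not use induction on $n$; it handles all $n\ge 4$ in one shot by resolving your ``main obstacle'' directly.

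Here is how the paper dispatches the normality of $I+H$, the step you left open. With the cut at $x_1$ one has
\[
I+H=(x_2x_3,\ x_{n-1}x_n,\ x_2x_n,\ x_ix_{i+1}x_{i+2}:i=3,\ldots,n-3),
\]
and the trick is simply to apply Theorem~\ref{I+xH} \emph{again}: peel off the variable $x_2$, writing $I+H=B+x_2A$ with $A=(x_3,x_n)$ and $B=(x_{n-1}x_n,\ x_ix_{i+1}x_{i+2}:i=3,\ldots,n-3)$. Now $A$ is trivially normal; $B$ is a $3$-path ideal of a path with one extra quadratic generator tacked on at the end, hence normally torsion-free by Corollary~\ref{Path.NTF} plus one application of Theorem~\ref{NTF2}; and $B+A=(x_3,\ x_n,\ x_ix_{i+1}x_{i+2}:i=4,\ldots,n-3)$ is handled the same way. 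So no parallel induction on ``broken-cycle'' ideals is needed---a second, nested use of Theorem~\ref{I+xH} finishes the job.
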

\begin{proof}
(i) Let $C_n=(V(C_n), E(C_n))$ be a cycle graph of order $n$ with $V(C_n)=\{x_1, \ldots, x_n\}$ and 
$E(C_n)=\{\{x_i, x_{i+1}\}~:~ i=1, \ldots, n-1\} \cup \{\{x_n, x_1\}\}.$ Then the closed neighborhood ideal of $C_n$ is given by 
$$NI(C_n)=(x_ix_{i+1}x_{i+2}~:~ i=1, \ldots, n) \subset R=K[x_1, \ldots, x_n],$$
where $x_{n+1}$ (respectively, $x_{n+2}$) represents $x_1$ (respectively, $x_2$).  If $n=3$, then 
$NI(C_3)=(x_1x_{2}x_{3})$, and so there is nothing to prove. Thus, let   $n\geq 4$. Put $H:=(x_2x_3, x_{n-1}x_n, 
x_2x_n)$ and  
$I:=(x_ix_{i+1}x_{i+2}~:~ i=2, \ldots, n-2).$ One can easily see that $NI(C_n)=I+x_1H$. Our strategy is to use Theorem~\ref{I+xH} to complete the proof. To do this, we first show that $I$, $H$, and $I+H$ are normal. Assume that  $G$ is  a path graph  with   $V(G)=\{x_2, x_3, x_{n-1},  x_n\}$ and  $E(G)=\{\{x_2, x_3\}, \{x_{n-1}, x_n\}, \{x_2,x_n\}\}$.  
It is routine to check that $I(G)=H$, where $I(G)$ denotes the  edge  ideal of $G$. Since, by \cite[Corollary 2.6]{GRV}, the edge ideal of any path graph is  normally torsion-free, and by remembering this fact that   every normally torsion-free square-free monomial ideal is normal, we deduce that $H$ is a normal square-free monomial ideal. Now, assume that $P$ is  a path graph with 
$V(P)=\{x_2, x_3,\ldots, x_{n-1}, x_n\}$ and 
$E(P)=\{\{x_i, x_{i+1}\}~:~ i=2, \ldots, n-1\}.$ 
 It is not hard to check that $I=I_3(P)$, where $I_3(P)$ denotes the path ideal of  length $2$ of $P$.  It follows readily from 
Corollary~\ref{Path.NTF} that $I=I_3(P)$ is normally torsion-free, and so is normal. To complete the proof, we show that $I+H$ is normal. To accomplish this, we note that 
$$I+H=(x_2x_3, x_{n-1}x_n, x_2x_n, x_ix_{i+1}x_{i+2}~:~ i=3, \ldots, n-3).$$ 
Set $A:=(x_3, x_n)$ and $B:=(x_{n-1}x_n,  x_ix_{i+1}x_{i+2}~:~ i=3, \ldots, n-3)$. Notice that 
$I+H=B+x_2A$. 
It is clear that $A$ is a normal ideal. Furthermore, it follows from Corollary~\ref{Path.NTF} and Theorem  \ref{NTF2} that 
$B$ is normally torsion-free, and so is normal. In addition, we have 
$$B+A=(x_3, x_n, x_ix_{i+1}x_{i+2}~:~ i=4, \ldots, n-3).$$
One can easily conclude from  Corollary~\ref{Path.NTF} and Theorem  \ref{NTF2} that $B+A$ is normally torsion-free, and hence is normal. By virtue of Theorem \ref{I+xH}, we deduce that $B+x_2A$ is normal, and so $I+H$ is normal as well. 
Finally,  note that 
 $\gcd (v,x_1)=1$ for all $v\in \mathcal{G}(I)\cup  \mathcal{G}(H)$. This finishes the proof. \par 
The claims (ii) and (iii) can be proven similar to parts (iv) and (v) in Theorem \ref{NTF2}. 
 \end{proof}


 The neighborhood ideals of cycles are particularly nice because they are generated by monomial of the  same degree. This fact together with Theorem~\ref{cycle1} enables us to study the depth of powers of $NI(C_n)$. For this purpose, we first recall the following definition and result from \cite{HQ}.

\begin{definition}\label{linearrelationgraphdef}
Let $I\subset R$ be a monomial ideal with $\mathcal{G}(I)=\{u_1, \ldots, u_m\}$. The {\em linear relation graph} $\Gamma_I$ of $I$ is the graph with the edge set 
\[
E(\Gamma_I)=\{\{x_i,x_j\}: \text{there exist $u_k$,$u_l \in \mathcal{G}(I)$ such that $x_iu_k=x_ju_l$}\},
\]
 and the vertex set $V(\Gamma_I)=\bigcup_{\{x_i,x_j\}\in E(\Gamma)}\{i,j\}$. 
 \end{definition}
 
\begin{theorem}\cite[Theorem 3.3]{HQ}\label{Thmdepth}
Let $I \subset R=K[x_1, \ldots, x_n]$ be a monomial ideal generated in a single degree whose linear relation graph has $r$ vertices and $s$ connected components. Then 
\[
\mathrm{depth}(R/I^t) \leq n-t-1 \text{ for } t=1, \ldots, r-s.
\]
\end{theorem}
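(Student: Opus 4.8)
The statement is due to Herzog and Qureshi, and my plan is to deduce it from two independent inputs: a lower bound for the analytic spread $\ell(I)$ extracted from the linear relation graph, and a bound for $\mathrm{depth}(R/I^{t})$ in terms of $\ell(I)$ that is special to ideals generated in a single degree. Explicitly, I would prove (A) $\ell(I)\ge r-s+1$, hence $r-s\le \ell(I)-1$, and then invoke (B): if $I$ is generated in a single degree then $\mathrm{depth}(R/I^{t})\le n-t-1$ for every $t$ with $1\le t\le \ell(I)-1$. Granting both, for each $t=1,\dots,r-s$ one has $t\le \ell(I)-1$, and therefore $\mathrm{depth}(R/I^{t})\le n-t-1$.

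For (A) the argument goes through the special fiber ring. Write $\mathcal{G}(I)=\{u_{1},\dots,u_{m}\}$ and let $\mathbf{a}_{i}\in\mathbb{Z}_{\ge 0}^{n}$ be the exponent vector of $u_{i}$; since $I$ is generated in a single degree $d$, each $\mathbf{a}_{i}$ has coordinate sum $d$. For a monomial ideal generated in a single degree the special fiber ring $\mathcal{F}(I)=\bigoplus_{t\ge 0}I^{t}/\mathfrak{m}I^{t}$ is isomorphic, as a $K$-algebra, to the toric subring $K[u_{1},\dots,u_{m}]\subseteq R$, whose Krull dimension equals $\mathrm{rank}_{\mathbb{Z}}\langle\mathbf{a}_{1},\dots,\mathbf{a}_{m}\rangle$; hence $\ell(I)=\dim\mathcal{F}(I)=\mathrm{rank}_{\mathbb{Z}}\langle\mathbf{a}_{1},\dots,\mathbf{a}_{m}\rangle$. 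Now any edge $\{x_{i},x_{j}\}$ of $\Gamma_{I}$ is witnessed by an identity $x_{i}u_{k}=x_{j}u_{l}$ with $u_{k},u_{l}\in\mathcal{G}(I)$, which is exactly the relation $\mathbf{a}_{k}-\mathbf{a}_{l}=\mathbf{e}_{j}-\mathbf{e}_{i}$ between exponent vectors; thus the lattice $\langle\mathbf{a}_{1},\dots,\mathbf{a}_{m}\rangle$ contains the edge vector $\mathbf{e}_{j}-\mathbf{e}_{i}$ of every edge of $\Gamma_{I}$. A graph on $r$ vertices with $s$ connected components has a spanning forest with exactly $r-s$ edges, and the corresponding edge vectors form a basis of the $\mathbb{Q}$-span of all the edge vectors; so these vectors span a rank-$(r-s)$ sublattice, all of whose elements have coordinate sum $0$. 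Since $\mathbf{a}_{1}$ has coordinate sum $d\ne 0$ it is independent of that sublattice, whence $\mathrm{rank}_{\mathbb{Z}}\langle\mathbf{a}_{1},\dots,\mathbf{a}_{m}\rangle\ge (r-s)+1$, i.e. $\ell(I)\ge r-s+1$.

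The main obstacle is (B). The rough idea is that for an ideal generated in a single degree the depth of $R/I^{t}$ already lies below the naive value $n-t$ (in fact it is $\le n-t-1$) as long as $t$ stays strictly below the analytic spread. I would establish this through the asymptotic theory of powers: $\bigoplus_{t}H^{i}_{\mathfrak{m}}(R/I^{t})$ is a finitely generated graded module over the Rees algebra $\mathcal{R}(I)=\bigoplus_{t}I^{t}$, which is generated over $R$ in degree one because $I$ is equigenerated; combining an analysis of this module with the Burch-type bound $\lim_{t}\mathrm{depth}(R/I^{t})\le n-\ell(I)$ and tracking the approach to the limit yields $\mathrm{depth}(R/I^{t})\le n-t-1$ for $1\le t\le \ell(I)-1$. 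This step genuinely uses homological input and is not visible from the associated primes alone: for instance, taking $I$ to be the $3$-path ideal of the path graph on five vertices, Corollary~\ref{Path.NTF} gives $\mathrm{Ass}(R/I^{2})=\mathrm{Min}(I)$ with every member of height at most $2$, yet $\mathrm{depth}(R/I^{2})=2$. The full argument for (B) is the crux of \cite[Theorem 3.3]{HQ} and its surrounding lemmas.

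It then remains only to combine (A) and (B) as in the first paragraph. Before finalising I would re-examine the boundary situations --- the case $t=r-s$, the case $\ell(I)=n$ (so that $n-t-1$ may become $0$), and the degenerate case $E(\Gamma_{I})=\emptyset$ in which the statement is vacuous --- to be sure that no off-by-one error slips in.
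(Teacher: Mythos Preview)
The paper does not supply its own proof of this statement; it merely quotes \cite[Theorem~3.3]{HQ} and uses it as a black box for Corollary~\ref{depth}. So there is no in-paper argument to match against, and the relevant comparison is with the proof in \cite{HQ} itself.

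Your step (A) is correct and standard: each edge $\{x_i,x_j\}$ of $\Gamma_I$ produces a difference $\mathbf{a}_k-\mathbf{a}_l=\mathbf{e}_j-\mathbf{e}_i$ inside the lattice spanned by the exponent vectors, a spanning forest yields $r-s$ linearly independent such vectors of coordinate sum $0$, and adjoining any $\mathbf{a}_i$ (coordinate sum $d\neq 0$) gives rank at least $r-s+1$, whence $\ell(I)\ge r-s+1$.

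The gap is (B). You claim that for an equigenerated monomial ideal $\mathrm{depth}(R/I^{t})\le n-t-1$ holds for every $t\le \ell(I)-1$, and you defer the proof to ``\cite[Theorem~3.3]{HQ} and its surrounding lemmas''. But \cite[Theorem~3.3]{HQ} \emph{is} the statement under discussion, and it is proved there with the bound $r-s$, not $\ell(I)-1$; since by your own (A) one has $r-s\le \ell(I)-1$, your (B) is a genuine strengthening, not a restatement, of what \cite{HQ} establishes. The paragraph invoking finite generation of $\bigoplus_t H^i_{\mathfrak m}(R/I^t)$ over the Rees algebra together with the Burch inequality is not a proof: Burch's inequality is asymptotic and gives no control on $\mathrm{depth}(R/I^t)$ for individual small $t$, and ``tracking the approach to the limit'' is not an argument. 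As written, the reduction is circular and (B) remains unproved.

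For contrast, the argument in \cite{HQ} does not pass through the analytic spread. Each edge $\{x_i,x_j\}$ of $\Gamma_I$, witnessed by $x_iu_k=x_ju_l$, contributes a linear syzygy on the generators of $I$. Choosing $t$ edges lying in a spanning forest of $\Gamma_I$ (this is exactly where the hypothesis $t\le r-s$ enters), Herzog and Qureshi build from these an explicit nonzero class in the linear strand of $\mathrm{Tor}^R_{t+1}(K,R/I^{t})$, so that $\mathrm{pd}(R/I^{t})\ge t+1$ and hence $\mathrm{depth}(R/I^{t})\le n-t-1$. The combinatorial independence furnished by the forest is precisely what guarantees non-vanishing of the Tor class, and there is no evident way to replace $r-s$ by the generally larger quantity $\ell(I)-1$ in that construction.
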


In order to apply above theorem, we first analyze the linear relation graph of $NI(C_n)$. Let $V(C_n)=[n]$ and $E(C_n)=\{\{1,2\}, \{2,3\}, \ldots, \{n-1,n\}, \{n,1\}\}$. We set the following notations.

\begin{enumerate}
\item $u_i=\prod_{j \in N[i]} x_j$. In simple words, $u_i$ is the monomial that corresponds to the closed neighborhood of the vertex $i$. 

\item Note that $u_i=x_{i-1}x_ix_{i+1}$, for all $i=2, \ldots n-1$ and $u_1=x_nx_1x_2$, $u_n=x_{n-1}x_n x_{1}$. To synchronize this notation for all $i$, if $i>n$ then we read $i$ as $i (\mathrm{mod}\; n)$. In this way, we can write $u_i=x_{i-1}x_ix_{i+1}$, for all $i=1, \ldots n$.

\end{enumerate}

\begin{remark} \label{rem1}
Let $i \neq j$. Note that each variable $x_i$ appears in exactly three monomials in $\mathcal{G}(NI(C_n))$, and these monomials are $u_{i-1}= x_{i-2}x_{i-1}x_i$, $u_i=x_{i-1}x_{i}x_{i+1}$  and $u_{i+1}=x_ix_{i+1}x_{i+2}$. From this observation, we conclude that $\{x_i,x_j\} \in E(\Gamma)$ if and only if  there exists a path of length three from $i$ to $j$ in $C_n$.  
Here a path $P$ of length $n$ is defined on $n+1$ vertices and $n$ edges.
\end{remark}

\begin{remark}\label{rem2}
Let $n \geq 4$, and set $I_n:=NI(C_n)$. Remark~\ref{rem1} leads us to the following:
\begin{enumerate}

 \item $|V(\Gamma_{I_n})| = n$. This can be easily verified because for every $i$, we can find another vertex $j$ such that there is a path of length three from $i$ to $j$ in $C_n$. \\
 
 \item $\Gamma_{I_n}$ has one connected component if $n\neq 3k$, for all $k \geq 2$.  Indeed, if \\$n=1 (\mathrm{mod}\; 3)$, that is, $n=3k+1$ for some $k \geq 1$, then we have 

\[E(\Gamma_{I_n})=\{ \{x_1, x_4\}, \{ x_4,x_7\}, \ldots, \{x_{3k-2},x_{3k+1}\},
\]
\[ 
\{ x_{3k+1}, x_{3}\}, \{x_3,x_6\}, \ldots, \{x_{3k-3}, x_{3k}\},
\]
\[
\{x_{3k}, x_2\}, \{x_2, x_5\}, \ldots, \{x_{3k-2}, x_1\}\}.
\]

If $n=2 (\mathrm{mod}\; 3)$, that is, $n=3k+2$ for some $k \geq 1$, then we have 

\[E(\Gamma_{I_n})=\{ \{x_1, x_4\}, \{ x_4,x_7\}, \ldots, \{x_{3k-2},x_{3k+1}\},
\]
\[ 
\{ x_{3k+1}, x_2\}, \{x_2,x_5\}, \ldots, \{x_{3k-1}, x_{3k+2}\},
\]
\[
\{x_{3k+2}, x_3\}, \{x_3, x_6\}, \ldots, \{x_{3k}, x_1\}\}.
\]

\item $\Gamma_{I_n}$ has three connected components if $n=3k$, for some $k \geq 2$. Set
$V(\Gamma_1)=\{x_1, x_4, \ldots, x_{3k-2}\}$, and 
\[
E(\Gamma_1)=\{ \{x_1,x_4\}, \{x_4,x_7\},\ldots,  \{x_{3k-2},x_1\} \}.
\]
Set $V(\Gamma_2)=\{x_2, x_5, \ldots, x_{3k-1}\}$, and 
\[
E(\Gamma_1)=\{ \{x_2,x_5\}, \{x_5,x_8\},\ldots,  \{x_{3k-1},x_2\} \}.
\]
Set $V(\Gamma_3)=\{x_3, x_6, \ldots, x_{3k}\}$, and 
\[
E(\Gamma_1)=\{ \{x_3,x_6\}, \{x_6,x_9\},\ldots,  \{x_{3k},x_3\} \}.
\]
It can be easily verified that $\Gamma_{I_n}$  is the disjoint union of $\Gamma_1$, $\Gamma_2$, and $\Gamma_3$.
\end{enumerate}
\end{remark}

Theorem~\ref{Thmdepth} together with Remark~\ref{rem2} leads to the following corollary:

\begin{corollary}\label{depth}
Let $n \neq 0(\mathrm{mod}\; 3)$. Set $I_n=NI(C_n)\subset R=K[x_1, \ldots, x_n]$. Then $\mathrm{depth}(R/I_n^{n-1})=0$. In particular, $\mathfrak{m} \in \mathrm{Ass}(R/I_n^{n-1})$ and $\mathrm{lim}_{k \rightarrow \infty}\mathrm{depth} R/I_n^k=0$. 
\end{corollary}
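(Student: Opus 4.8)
\textbf{Proof proposal for Corollary~\ref{depth}.}

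The plan is to apply Theorem~\ref{Thmdepth} with $t=n-1$, which requires knowing that the linear relation graph $\Gamma_{I_n}$ has $r$ vertices and $s$ connected components with $r-s \geq n-1$. When $n \neq 0 \pmod 3$, Remark~\ref{rem2}(1) gives $|V(\Gamma_{I_n})| = n$, so $r=n$, and Remark~\ref{rem2}(2) gives that $\Gamma_{I_n}$ is connected, so $s=1$. Hence $r-s = n-1$, and Theorem~\ref{Thmdepth} applied at $t=n-1$ yields $\mathrm{depth}(R/I_n^{n-1}) \leq n-(n-1)-1 = 0$. Since depth is always nonnegative, this forces $\mathrm{depth}(R/I_n^{n-1}) = 0$.

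From $\mathrm{depth}(R/I_n^{n-1})=0$ I would extract the membership $\mathfrak{m} \in \mathrm{Ass}(R/I_n^{n-1})$: depth zero of $R/I_n^{n-1}$ means the maximal ideal $\mathfrak{m}$ consists of zero-divisors on $R/I_n^{n-1}$ (equivalently $\mathrm{depth}\, R/I_n^{n-1} = 0$ iff $\mathfrak{m}$ is an associated prime), which is the standard characterization of depth via associated primes for a finitely generated module over the local (or $*$-local graded) ring. This is immediate and needs no extra work.

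For the final claim $\lim_{k \to \infty} \mathrm{depth}\, R/I_n^k = 0$, I would invoke Theorem~\ref{cycle1}: $NI(C_n)$ has the persistence property, hence $\mathrm{Ass}(R/I_n^k) \subseteq \mathrm{Ass}(R/I_n^{k+1})$ for all $k$. Since $\mathfrak{m} \in \mathrm{Ass}(R/I_n^{n-1})$, persistence gives $\mathfrak{m} \in \mathrm{Ass}(R/I_n^k)$ for all $k \geq n-1$, and therefore $\mathrm{depth}\, R/I_n^k = 0$ for all $k \geq n-1$. In particular the limit (which exists by Brodmann's theorem, or here trivially since the sequence is eventually constant equal to $0$) equals $0$.

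The only mild subtlety, and the step I would state most carefully, is the bookkeeping that $n \neq 0 \pmod 3$ is exactly the hypothesis making $\Gamma_{I_n}$ connected, so that $r - s$ attains the value $n-1$ needed to push $t$ all the way up to $n-1$ in Theorem~\ref{Thmdepth}; when $3 \mid n$ the graph splits into three components and $r-s = n-3$, which is why the corollary excludes that case. Everything else is a direct citation of Theorem~\ref{Thmdepth}, Remark~\ref{rem2}, and Theorem~\ref{cycle1}, so there is no real obstacle beyond assembling these pieces in order.
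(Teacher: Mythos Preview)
Your proposal is correct and follows exactly the approach the paper intends: the paper presents Corollary~\ref{depth} as an immediate consequence of Theorem~\ref{Thmdepth} and Remark~\ref{rem2}, and (as noted in the introduction) relies on the persistence property from Theorem~\ref{cycle1} for the limiting statement. You have simply spelled out these citations carefully, so there is nothing to add or correct.
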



We provide the fourth  main result of this paper in the subsequent  theorem, which is related to the dominating  ideals of cycles. We will use the following result to establish our proof. 

\begin{corollary} \label{Cor.1} \cite[Corollary 3.3]{NQKR}
Let $I$ be a square-free  monomial ideal in a polynomial ring $R=K[x_1, \ldots, x_n]$ over a field $K$.  Let   $I(\mathfrak{m}\setminus \{x_i\})$ be  normally torsion-free  for all $i=1, \ldots, n$. Then $I$ is nearly normally torsion-free. 
\end{corollary}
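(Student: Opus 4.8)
The plan is to deduce the statement from the behaviour of associated primes of powers of a monomial ideal under monomial localization. The hypothesis controls the localizations $I(\mathfrak{m}\setminus\{x_i\})$ at all the height $n-1$ monomial primes $\mathfrak{m}\setminus\{x_i\}$, and this should force every associated prime of every power of $I$ to be either minimal over $I$ or equal to the graded maximal ideal $\mathfrak{m}=(x_1,\ldots,x_n)$; this is exactly nearly normal torsion-freeness, with the distinguished prime taken to be $\mathfrak{p}=\mathfrak{m}$.

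First I would set up the \emph{localization dictionary}. Fix $i$ and put $R'=K[x_j:j\neq i]$. Localizing $R$ at the powers of $x_i$ identifies $R_{x_i}$ with the Laurent extension $R'[x_i^{\pm 1}]$, and since $x_i$ becomes a unit there, $IR_{x_i}$ is generated by the images of $\mathcal{G}(I)$ under $x_i\mapsto 1$, that is, by $\mathcal{G}\big(I(\mathfrak{m}\setminus\{x_i\})\big)$; hence $IR_{x_i}=I(\mathfrak{m}\setminus\{x_i\})\,R'[x_i^{\pm 1}]$. Using that localization at $\{x_i^{t}\}$ retains exactly those associated primes that avoid $x_i$ and commutes with taking powers, and that the faithfully flat extension $R'\hookrightarrow R'[x_i^{\pm 1}]$ preserves the sets of associated primes and of minimal primes, one obtains for every $k\geq 1$ a bijection
\[
\{\,\mathfrak{q}\in\mathrm{Ass}(R/I^{k}):x_i\notin\mathfrak{q}\,\}\;\longleftrightarrow\;\mathrm{Ass}\big(R'/I(\mathfrak{m}\setminus\{x_i\})^{k}\big),
\]
together with the analogous bijection with $\mathrm{Min}$ in place of $\mathrm{Ass}$. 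The trivial cases $I=\mathfrak{m}$ (where there is nothing to prove) and $I(\mathfrak{m}\setminus\{x_i\})=R'$ should be disposed of at the start.

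Next I would invoke the hypothesis. Since $I(\mathfrak{m}\setminus\{x_i\})$ is normally torsion-free, the right-hand side above equals $\mathrm{Min}\big(I(\mathfrak{m}\setminus\{x_i\})\big)$, so the dictionary yields $\{\,\mathfrak{q}\in\mathrm{Ass}(R/I^{k}):x_i\notin\mathfrak{q}\,\}\subseteq\mathrm{Min}(I)$ for every $i$ and every $k\geq 1$. Now take an arbitrary $\mathfrak{p}\in\mathrm{Ass}(R/I^{k})$: either $\mathfrak{p}=\mathfrak{m}$, or $x_i\notin\mathfrak{p}$ for some $i$, in which case $\mathfrak{p}\in\mathrm{Min}(I)$ by the inclusion just obtained. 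Hence $\mathrm{Ass}(R/I^{k})\subseteq\mathrm{Min}(I)\cup\{\mathfrak{m}\}$ for all $k\geq 1$. Since $I$ is square-free we also have $\mathrm{Ass}(R/I)=\mathrm{Min}(I)$, so $k=1$ together with $\mathfrak{p}=\mathfrak{m}$ satisfies the definition, and $I$ is nearly normally torsion-free.

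I expect the main obstacle to be the careful justification of the localization dictionary: verifying that the Laurent polynomial extension preserves both $\mathrm{Ass}$ and $\mathrm{Min}$ (via $\mathfrak{p}\mapsto \mathfrak{p}R'[x_i^{\pm 1}]$), that $IR_{x_i}$ is genuinely extended from $I(\mathfrak{m}\setminus\{x_i\})$, and that the minimal primes of $I$ not containing $x_i$ correspond to those of the localization. These are standard facts about monomial localization, so the difficulty lies in assembling them cleanly rather than in any single hard step.
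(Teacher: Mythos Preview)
The paper does not supply its own proof of this corollary; it is quoted from \cite[Corollary~3.3]{NQKR} and invoked as a black box in the proof of Theorem~\ref{cycle2}. There is therefore nothing in the present paper to compare your argument against.

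That said, your argument is correct and is the natural one. The monomial localization $I(\mathfrak{m}\setminus\{x_i\})$ coincides with $IR_{x_i}$ read in $R'=K[x_j:j\neq i]$; associated primes of $I^k$ that avoid $x_i$ pass bijectively to associated primes of $I(\mathfrak{m}\setminus\{x_i\})^k$ in $R'$, and the same holds for minimal primes. The normal torsion-freeness hypothesis on each localization then forces every associated prime of $I^k$ missing some $x_i$ to be minimal over $I$. Since the only monomial prime containing every variable is $\mathfrak{m}$, you obtain $\mathrm{Ass}(R/I^k)\subseteq\mathrm{Min}(I)\cup\{\mathfrak{m}\}$ for all $k\geq 1$; together with $\mathrm{Ass}(R/I)=\mathrm{Min}(I)$ (as $I$ is square-free), taking $k=1$ and $\mathfrak{p}=\mathfrak{m}$ verifies the definition of nearly normally torsion-free. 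The localization facts you flag as the delicate point are indeed standard for monomial ideals and present no real obstacle.
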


Now, we state the next main result.

\begin{theorem} \label{cycle2}
The dominating  ideals of cycles are nearly normally torsion-free. 
\end{theorem}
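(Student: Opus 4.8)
The plan is to deduce Theorem~\ref{cycle2} from Corollary~\ref{Cor.1}. Since $DI(C_n)$ is a square-free monomial ideal, it suffices to prove that the ideal $DI(C_n)(\mathfrak{m}\setminus\{x_i\})$, obtained from $DI(C_n)$ by deleting the variable $x_i$ (equivalently, by specializing $x_i=1$), is normally torsion-free for every vertex $i$ of $C_n$. The dihedral automorphism group of $C_n$ acts transitively on the vertices and fixes $DI(C_n)$, so all of these deletion ideals are isomorphic, and it is enough to handle one of them; I would work with the deletion at $x_n$.

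The next step is to identify this ideal explicitly. Using $DI(C_n)=NI(C_n)^{\vee}=\bigcap_{v}(x_{v-1},x_v,x_{v+1})$ (intersection over $v=1,\dots,n$, indices read modulo $n$) together with the fact that specialization at $x_n=1$ commutes with intersections of monomial ideals, one obtains
\[
DI(C_n)(\mathfrak{m}\setminus\{x_n\})=\bigcap_{v=2}^{n-2}(x_{v-1},x_v,x_{v+1})=(I_3(P))^{\vee},
\]
the Alexander dual of the $3$-path ideal of the path $P$ on the vertices $1,\dots,n-1$; combinatorially this is the transversal ideal of the hypergraph whose edges are the triples of consecutive vertices of $P$. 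The underlying picture is that deleting the vertex $n$ collapses $C_n$ to the path $P$, its two endpoints $1$ and $n-1$ no longer need to be dominated (they were dominated only by the removed vertex), while the $n-3$ interior vertices retain their closed-neighborhood constraints.

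The heart of the proof --- and the step I expect to be the real obstacle --- is to show that $(I_3(P))^{\vee}=\bigcap_{v=2}^{n-2}(x_{v-1},x_v,x_{v+1})$ is normally torsion-free. I would proceed by induction on $n$, peeling off the top variables: when some variable lies in every one of the triples one splits it off by Theorem~\ref{NTF2}, and otherwise a short analysis of the minimal transversals shows that the remaining ideal is either again of this shape or the edge ideal of a bipartite graph (normally torsion-free by \cite[Corollary 2.6]{GRV}) to which a monomial prime supported on fresh variables has been adjoined, a situation governed by Lemma~\ref{NTF1}; as in Remark~\ref{rem2} the relevant combinatorics depends on the residue of $n$ modulo $3$, so the induction naturally divides into those three cases. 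One may also observe that $I_3(P)$ is an interval hypergraph, so that both it and its blocker are balanced and hence satisfy the max-flow--min-cut property, which again gives normal torsion-freeness of $(I_3(P))^{\vee}$; but the inductive argument has the advantage of staying within the toolkit built in Sections~\ref{trees} and~\ref{sec-cycles}. Granting normal torsion-freeness of the deletion ideal for every $i$, Corollary~\ref{Cor.1} yields at once that $DI(C_n)$ is nearly normally torsion-free.
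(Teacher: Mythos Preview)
Your overall strategy is exactly the paper's: invoke Corollary~\ref{Cor.1}, use the cyclic symmetry to reduce to a single deletion, and identify the resulting ideal as the Alexander dual of the $3$-path ideal of a path. The only divergence is in how the key lemma---normal torsion-freeness of $(I_3(P))^{\vee}$---is dispatched. The paper does not argue inductively at all; it observes that a path is a rooted tree and cites \cite[Theorem~3.2]{KHN1}, which states that the Alexander dual of the path ideal of any rooted tree is normally torsion-free. With that reference the proof ends in one line.

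Your proposed induction, on the other hand, does not go through with the tools you name. Theorem~\ref{NTF2} adjoins a single \emph{generator} $h$ to an ideal under gcd restrictions; it does not let you ``split off a variable lying in every triple'' on the dual side. Lemma~\ref{NTF1} (equivalently Theorem~\ref{Coloring2}) lets you intersect with one new \emph{prime} sharing exactly one vertex with the existing hypergraph, but each consecutive triple $\{i-1,i,i+1\}$ overlaps its predecessor in two vertices, so that peel-off move is unavailable here. The appeal to Remark~\ref{rem2} is also off target: that remark concerns the linear relation graph of $NI(C_n)$ and depth bounds, not the cover ideal of $I_3(P)$, and there is no mod-$3$ case split in the paper's argument. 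Your side remark about interval/balanced hypergraphs is a correct alternative (the incidence matrix of $I_3(P)$ is an interval matrix, hence totally unimodular, so both the clutter and its blocker have max-flow--min-cut), but the paper's route through \cite{KHN1} is shorter and stays within the references already set up in the text.
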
 
\begin{proof}
Let $C_n$ denote a cycle graph of order $n$ with 
$V(C_n)=\{x_1, \ldots, x_n\}$ and 
$E(C_n)=\{\{x_i, x_{i+1}\}~:~ i=1, \ldots, n-1\} \cup \{\{x_n, x_1\}\}$. In the light of \cite[Lemma 2.2]{SM},  the dominating  ideal of $C_n$ is given by 
$$DI(C_n)=\bigcap_{i=1}^n(x_i, x_{i+1}, x_{i+2})\subset R=[x_1, \ldots, x_n],$$
where $x_{n+1}$ (respectively, $x_{n+2}$) represents $x_1$ 
(respectively, $x_2$). Set $I:=DI(C_n)$. Our strategy is to use Corollary \ref{Cor.1}. To do this, we must show that $I(\mathfrak{m}\setminus \{x_i\})$ is   normally torsion-free  for all $i=1, \ldots, n$, where $\mathfrak{m}=(x_1, \ldots, x_n)$. Without loss of generality, it is sufficient for us to prove that $I(\mathfrak{m}\setminus \{x_1\})$ is   normally torsion-free. 
To simplify notation, set $F:=\bigcap_{i=2}^{n-2}(x_{i}, x_{i+1}, x_{i+2})$. By virtue of  Corollary \ref{Cor.1}, 
one has to show that 
the ideal $F=I(\mathfrak{m}\setminus \{x_1\})$ is normally torsion-free. To do this, let $T=(V(T), E(T))$ be the rooted  tree with the root $2$, the  vertex set $V(T)=\{x_2, \ldots, x_{n}\}$, and the edge set 
$E(T)=\{(x_{i}, x_{i+1})~:~ i=2, \ldots, n-1\}$, where 
$(x_{i}, x_{i+1})$ denotes the directed edge from the vertex $x_i$ to the vertex $x_{i+1}$ for all $i=2, \ldots, n-1$. It is not hard to check that $F$ is the  Alexander dual
of the path ideal  generated by all paths of length 2 in the rooted tree $T$. Now, one can  deduce from 
\cite[Theorem 3.2]{KHN1} that $F=I(\mathfrak{m}\setminus \{x_1\})$ is normally torsion-free. This completes the proof.  
\end{proof}


  \noindent{\bf Acknowledgments.}


We   would like to thank Professor Adam Van Tuyl for his valuable comments  in preparation of Theorem \ref{Coloring2}.
Moreover, Mehrdad Nasernejad  was in part supported by a grant  from IPM (No. 14001300118). Ayesha Asloob Qureshi and Asli Musapa\c{s}ao\u{g}lu were supported by The Scientific and Technological Research Council of Turkey - T\"UBITAK (Grant No: 118F169).


\end{document}